\newlength{\IdentationTDConditions}
\newlength{\IdentationTDExtraCondition}
\definecolor{gray}{rgb}{.4,.4,.4}
\newcommand{\diam}{\operatorname{diam}}
\newcommand{\bigO}{\operatorname{\mathcal{O}}}
\newcommand{\X}{\mathcal{X}}
\newcommand{\Bceil}[1]{\left\lceil#1\right\rceil}
\newcommand{\Bfloor}[1]{\left\lfloor#1\right\rfloor}
\newcommand{\tw}{\operatorname{tw}}
\newcommand{\ksec}[1]{\mbox{\(#1\)-section}}
\newcommand{\Ksec}[1]{\mbox{\(#1\)-Section}}
\newcommand{\minsec}[2]{\operatorname{MinSec}_{#1}(#2)}
\newcommand{\cut}[1]{\mbox{\(#1\)-cut}}
\newtheorem{thm}{Theorem}
\newtheorem{lemma}[thm]{Lemma} 
\newtheorem{cor}[thm]{Corollary} 
\newtheorem{prop}[thm]{Proposition}
\newtheorem{defi}[thm]{Definition}
\providerobustcmd*{\bigcupdot}{%
  \mathop{%
    \mathpalette\bigop@dot\bigcup
  }%
}
\newrobustcmd*{\bigop@dot}[2]{%
  \setbox0=\hbox{\(\m@th#1#2\)}%
  \vbox{%
    \lineskiplimit=\maxdimen
    \lineskip=-0.7\dimexpr\ht0+\dp0\relax
    \ialign{%
      \hfil##\hfil\cr
      \(\m@th\cdot\)\cr
      \box0\cr
    }%
  }%
}
\begin{document}


\title[Minimum \(k\)-Section in Trees]{Approximating the Minimum $k$-Section Width\\ in Bounded-Degree Trees with Linear Diameter} 

\author{Cristina G.\ Fernandes}
\address[Cristina~G.~Fernandes]{Instituto de Matem\'atica e Estat\'{\i}stica \\
  Universidade de S\~ao Paulo\\
  Rua do Mat\~ao~1010, 05508--090\\
  S\~ao Paulo\\
  Brazil}
\email{cris@ime.usp.br}
\thanks{The first author was partially supported by CNPq Proc.~308523/2012-1 and 477203/2012-4, FAPESP 2013/\mbox{03447-6}, and Project MaCLinC of NUMEC/USP}
 
\author{Tina Janne Schmidt}
\author{Anusch Taraz}
\address[Tina~Janne~Schmidt and Anusch~Taraz]{Institut f\"ur Mathematik\\
  TU Hamburg\\
  Am Schwarzenberg-Campus~3\\
  21073 Hamburg\\
  Germany}
\email{tina.janne.schmidt@tuhh.de \textnormal{and} taraz@tuhh.de}
\thanks{The second author gratefully acknowledges the support by the Evangelische Studienwerk Villigst e.V.\\ 
The research of the three authors was supported by a PROBRAL CAPES/DAAD Proc.~430/15 (February 2015 to December 2016, DAAD Projekt-ID 57143515). \\
Some of the results that are proven in this work have already been announced in extended abstracts at LAGOS~2015 and EuroComb~2015 \cite{EuroComb2015, LagosKSec}}


\date{\today}

\subjclass[2010]{05C05, 05C85, 90C27, 90C59}

\keywords{Minimum \(k\)-Section, Tree, Tree Decomposition, Approximation}


\maketitle          
\thispagestyle{empty}


\begin{quote}
\footnotesize
\textsc{Abstract.} Minimum \mbox{\(k\)-Section} denotes the NP-hard problem to partition the vertex set of a graph into~\(k\) sets of sizes as equal as possible  while minimizing the cut width, which is the number of edges between these sets. When~\(k\) is an input parameter and~\(n\) denotes the number of vertices, it is NP-hard to approximate the width of a minimum \mbox{\(k\)-section} within a factor of~\(n^c\) for any~\(c <1\), even when restricted to trees with constant diameter. Here, we show that every tree~\(T\) allows a \mbox{\(k\)-section} of width at most~\( (k-1) (2 + 16n / \operatorname{diam}(T) ) \Delta(T) \). This implies a polynomial-time constant-factor approximation for the Minimum \Ksec{k} Problem when restricted to trees with linear 
diameter and constant maximum degree. Moreover, 
we extend our results from trees to arbitrary graphs with a given tree decomposition.
\end{quote}

\section{Introduction}
\label{secIntroduction}

\subsection{The Minimum \mbox{$k$-Section} Problem}
\label{subsecMinKSec} 

We start with a few definitions. A \emph{\ksec{k}} in a graph~\(G\) is a partition~\((V_1, V_2, \ldots, V_{k})\) of its vertex set into~\(k\) sets whose sizes are as close to equal as possible. 
The \emph{width} of a \ksec{k}~\((V_1, \ldots, V_k)\) is the number of edges between the sets~\(V_{\ell}\) and is denoted by~\(e_G(V_1, \ldots, V_k)\). The minimum width among all \ksec{k}s in a graph~\(G\) is denoted by~\(\minsec{k}{G}\). The goal of the \emph{Minimum \Ksec{k} Problem} is to compute~\(\minsec{k}{G}\) and a corresponding \ksec{k} for a given graph~\(G\) and an integer~\(k \geq 2\). This problem has many applications, e.g.~in parallel computing, when tasks have to be evenly distributed to processors while minimizing the communication cost.  

The special case where~\(k=2\), which is also called the \emph{Minimum Bisection Problem}, is known to be \mbox{NP-hard} for general graphs, see~\cite{GareyJohnsonStockmeyer}. Jansen et al.~\cite{JansenKarpinski} use dynamic programming to compute a minimum bisection in a tree on~\(n\) vertices in~\(\mathcal{O}( n^3)\) time. Their method can be turned into an algorithm for computing a minimum \mbox{\(k\)-section} in trees, whose running time is polynomial in~\(n\) but not in~\(k\). However, when~\(k\) is part of the input,~\(\minsec{k}{G}\) cannot be approximated in polynomial time within any finite factor on general graphs~\(G\) unless P\(=\)NP, see~\cite{AndreevRaecke}. The reduction presented there is from the strongly \mbox{NP-hard} \mbox{3-Partition} Problem and it is easy to 
adjust it in order to show that the Minimum \Ksec{k} Problem restricted to forests cannot be approximated within any finite factor unless P\(=\)NP.  

Feldmann and Foschini~\cite{FeldmannFoschini} studied minimum \ksec{k}s in trees, and pointed out some counterintuitive behavior even on this rather restricted class of graphs. Moreover, they showed in~\cite{FeldmannFoschini} that the Minimum \mbox{\(k\)-Section} Problem remains APX-hard when restricted to trees with maximum degree at most 7, and that, for any~\(c < 1\), it is \mbox{NP-hard} to approximate~\(\operatorname{MinSec}(k,T)\) within a factor of~\(n^c\) for trees with constant diameter. 

\subsection{Results}
\label{subsecResults}

Here, we study the Minimum \Ksec{k} Problem in trees and focus on bounded-degree trees with linear diameter. Moreover, we extend our results to tree-like graphs. Our first result gives an upper bound on the width of a minimum \mbox{\(k\)-section} in trees and a corresponding algorithm.

\begin{thm}
  \label{thmTreeKSec}
  For every integer~\(k \geq 2\) and for every tree~\(T\) on~\(n\) vertices, a \ksec{k}~\((V_1, \ldots, V_k)\) in~\(T\) with
  \[ e_T (V_1, \ldots, V_k) \ \leq \ (k-1)  \cdot \left( 2 + \frac{16n}{\diam(T)} \right) \cdot \Delta(T) \]
  can be computed in~\(\bigO (kn) \) time.  
\end{thm}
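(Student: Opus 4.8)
The plan is to exploit a longest path $P = p_0 p_1 \cdots p_d$ of $T$ (so $d = \diam(T)$) as a ``backbone'' along which to cut. For $0 \le i \le d$ let $B_i \subseteq V(T)$ be the set of vertices whose shortest path to $V(P)$ ends at $p_i$; the $B_i$ partition $V(T)$, each $T[B_i]$ is a subtree rooted at $p_i$, and $\sum_{i=0}^{d} |B_i| = n$. Maximality of $P$ forces $T[B_i]$ to have depth at most $\min(i, d-i)$, so in particular $B_0 = \{p_0\}$, $B_d = \{p_d\}$, and a ``large'' branch can only hang near the middle of $P$.

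First I would cut $P$ into $k$ consecutive segments, choosing the $k-1$ cut edges greedily so that the resulting ``slabs'' $S_1, \ldots, S_k$ (a slab being a segment of $P$ together with all branches hanging from it) are as balanced as possible: take the $\ell$-th cut edge $p_{j_\ell}p_{j_\ell+1}$ with $|B_0| + \cdots + |B_{j_\ell}|$ as close to $\ell n/k$ as possible. This uses only $k-1$ edges of $T$, and the size of slab $S_\ell$ then deviates from $n/k$ by less than $\max_i |B_i|$. Call a branch \emph{light} if $|B_i| \le 16n/d$ and \emph{heavy} otherwise. If every branch is light, then every slab already has size within $16n/d$ of $n/k$, and it remains only to rebalance the slabs to the exact sizes $\lfloor n/k\rfloor$ or $\lceil n/k\rceil$: processing the boundaries from left to right, at boundary $\ell$ move a suitable sub-piece of the single straddling light branch into the neighbouring slab. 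Such a sub-piece has at most $16n/d$ vertices, hence induces at most $16n\Delta/d$ new cut edges, and there are $k-1$ boundaries, so the total width is at most $(k-1) + (k-1)\cdot 16n\Delta/d \le (k-1)(2 + 16n/d)\Delta$, as required. All of this is a single linear sweep over $P$ plus $\bigO(n)$ local work, comfortably within $\bigO(kn)$ time.

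The main obstacle is the heavy branches. There are fewer than $d/16$ of them (they are disjoint and each has more than $16n/d$ vertices), but a single heavy branch $B_i$ may contain several whole target parts, so the subtree $T[B_i]$ itself must be subdivided into pieces of the prescribed sizes without spending too much. The plan is to do this combinatorially rather than by invoking the theorem recursively on $B_i$: because $T[B_i]$ has maximum degree at most $\Delta$, for any threshold $s$ one can repeatedly cut off, with a \emph{single} edge each, a subtree whose size lies in $[s, \Delta s]$; this reduces the sectioning of $B_i$ to bin-packing a family of small pieces (each of size at most $\lceil n/k\rceil$) into $\sigma_i$ groups, and the slack needed to force the group sizes to be \emph{exactly} $\lfloor n/k\rfloor$ or $\lceil n/k\rceil$ is then absorbed using the length of $P$ that survives outside $B_i$. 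Since the boundaries lying inside heavy branches are among the global $k-1$ boundaries, one gets $\sum_{\text{heavy }i}(\sigma_i - 1) \le k-1$, and then the cost charged to heavy branches is again $\bigO((k-1)\Delta)$.

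I expect the hard part to be precisely this last interface: guaranteeing that the final part sizes are exactly $\lfloor n/k\rfloor$ or $\lceil n/k\rceil$ \emph{globally}, while keeping the per-boundary cost at $\bigO(\Delta n/d)$ and making the heavy-branch step dovetail with the light-branch rebalancing, so that the constant in front actually comes out as $16$ (rather than something larger). Both the maximum-degree bound and the fact that enough of the diameter path remains unused are essential in that step, and most of the technical work — and probably a more carefully engineered greedy process than the one sketched above — will go into it.
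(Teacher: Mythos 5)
Your backbone setup (longest path, branches $B_i$, $k-1$ boundaries each charged at most $(2+16n/d)\Delta$) matches the spirit of the paper's argument, and the light-branch phase is essentially fine: moving $q$ arbitrary vertices across a boundary creates at most $q\Delta$ new cut edges, and your greedy placement keeps $q=\bigO(n/d)$. The genuine gap is exactly where you suspect it, and it is not just a matter of engineering: the heavy-branch strategy of repeatedly cutting off, with one edge each, a subtree of size in $[s,\Delta s]$ and then bin-packing cannot meet the bound for any choice of $s$. If the groups assembled inside a heavy branch are to be corrected afterwards by shifting $\bigO(n/d)$ vertices along the path (the only correction your budget allows), each group's size must already be within $\bigO(n/d)$ of $n/k$, which forces $\Delta s=\bigO(n/d)$; but then a single group of size about $n/k$ consists of at least $\Omega(d/(k\Delta))$ pieces, i.e., $\Omega(d/\Delta)$ cut edges over all groups. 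For a tree consisting of a path on $n/2$ vertices with a perfect ternary tree on $n/2$ vertices hanging from its midpoint and $k=2$, this is $\Omega(n)$ edges against a promised bound of $34\Delta$. Taking $s$ large enough that each group is $\bigO(1)$ pieces instead leaves a residual error of $\Omega(\Delta n/k)$ per group, which cannot be absorbed at cost $\bigO(\Delta n/d)$. In short, exact sizes cannot be manufactured inside a branchy subtree by single-edge subtree removals; the paper's own discussion of the perfect ternary tree (whose bisection width is $\Omega(\log n)$) already shows that even the information-theoretic answer inside such a branch is not $\bigO(1)$.

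The paper escapes this by never trying to hit an exact size inside a branch. It proves \cref{lemmaTreeCutPresDiam}: from any forest one can remove \emph{exactly} $m$ vertices at cost $\left(2+\frac{16}{\diam^*}\right)\Delta$ while the relative diameter of the remainder does not decrease, and then simply iterates this $k-1$ times. Inside the lemma, a window of $m$ consecutive labels of a $P$-labeling is chosen so that it contains exactly $\Bfloor{dm}$ path vertices; the single straddling branch is handled by one \emph{approximate} cut (\cref{lemmaApproxCutTree}, cost $\le\Delta$), producing a set $\tilde V$ with $m\le|\tilde V|\le 2m$ and $\diam^*(G[\tilde V])\ge d/2$; and the exact extraction of $m$ vertices from $G[\tilde V]$ at cost $\frac{16}{d}\Delta$ is delegated to \cref{thmTreeDiam}, the minimum-bisection-type theorem from the companion paper. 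That theorem is precisely the non-elementary ingredient your bin-packing tries to replace: the fine-tuning to an exact cardinality is always done against a proportional share of the path that is packaged into $\tilde V$ together with the approximate cut of the branch, never against the branch alone. To repair your argument you would need to invoke (or reprove) such a result for each heavy branch together with an adjacent stretch of $P$, and also maintain an invariant, as the lemma does, guaranteeing that later boundaries still see their fair share of the path.
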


Here, as usual,~\(\Delta(T)\) and~\(\diam(T)\) denote the maximum degree of~\(T\) and the diameter of \(T\), respectively, where the latter is defined as the length of a longest path in~\(T\). 
Obviously, for \(k \geq n\), any graph on~\(n\) vertices has essentially only one \mbox{\(k\)-section}, so we can assume without loss of generality that \(k<n\) and, hence, the running time in Theorem~\ref{thmTreeKSec} is always polynomial in the input length.  

Let~\(\Delta_0 \in \mathbb{N}\) and~\(d>0\) be two constants. Then, for any tree~\(T\) on~\(n\) vertices with~\(\Delta(T) \leq \Delta_0\) and~\(\diam(T) \geq dn\), the factor \( ( 2 + 16n / \operatorname{diam}(T) ) \Delta(T)\) from the previous theorem is bounded by a constant that depends only on~\(\Delta_0\) and~\(d\). As every \ksec{k} of a tree has width at least~\(k-1\), this yields a constant-factor approximation for~\(\minsec{k}{T}\) for such a class of trees.

\begin{cor}
  \label{corTreeKSec}
  For all~\(\Delta_0 \in \mathbb{N}\) and~\(d>0\), there is a constant~\(c >1 \) such that the following holds. Let~\(\mathcal{T}\) be a class of trees such that every tree~\(T \in \mathcal{T}\) on~\(n\) vertices satisfies~\(\Delta(T) \leq \Delta_0\) and~\(\diam(T) \geq dn\). Then there is a \mbox{\(c\)-approximation} for the Minimum \mbox{\(k\)-Section} Problem restricted to the class~\(\mathcal{T}\). In particular, one can choose~\(c = \left(2+ \frac{16}{d}\right) \Delta_0\).
\end{cor}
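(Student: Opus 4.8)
The plan is to derive the corollary directly from Theorem~\ref{thmTreeKSec}, combined with the trivial lower bound $\minsec{k}{T} \geq k-1$ valid for every tree. First I would establish this lower bound. Given any \ksec{k}~$(V_1, \ldots, V_k)$ of a tree~$T$, we may assume $k < n$ (as noted after Theorem~\ref{thmTreeKSec}, the case $k \geq n$ being trivial), so that all parts are nonempty. Contracting each part~$V_\ell$ to a single vertex turns~$T$ into a connected multigraph on~$k$ vertices, which has at least $k-1$ edges; each such edge corresponds to an edge of~$T$ running between two distinct parts, so $e_T(V_1, \ldots, V_k) \geq k-1$ and hence $\minsec{k}{T} \geq k-1$.

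Next I would invoke Theorem~\ref{thmTreeKSec} to compute, in~$\bigO(kn)$ time (which is polynomial in the input length since $k < n$), a \ksec{k}~$(V_1, \ldots, V_k)$ of~$T$ with
\[ e_T(V_1, \ldots, V_k) \ \leq \ (k-1)\left(2 + \frac{16n}{\diam(T)}\right)\Delta(T) . \]
Dividing by the lower bound $\minsec{k}{T} \geq k-1$ then yields
\[ \frac{e_T(V_1, \ldots, V_k)}{\minsec{k}{T}} \ \leq \ \left(2 + \frac{16n}{\diam(T)}\right)\Delta(T) . \]

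Finally I would plug in the hypotheses $\Delta(T) \leq \Delta_0$ and $\diam(T) \geq dn$, which give $16n/\diam(T) \leq 16/d$ and thus bound the right-hand side by $c := \left(2 + \tfrac{16}{d}\right)\Delta_0$, a constant depending only on~$\Delta_0$ and~$d$; since $c \geq 2 > 1$, this is an admissible approximation factor. Consequently the algorithm of Theorem~\ref{thmTreeKSec} is the desired \mbox{$c$-approximation} for the Minimum \Ksec{k} Problem restricted to~$\mathcal{T}$.

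Since every step is an immediate consequence of Theorem~\ref{thmTreeKSec}, I do not expect any real obstacle here. The only points requiring a short argument are the lower bound $\minsec{k}{T} \geq k-1$ and the reduction to the case $k < n$ (needed both for the validity of that lower bound and for the polynomiality of the running time), and both are elementary and already flagged in the surrounding text.
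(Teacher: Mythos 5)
Your proposal is correct and follows exactly the paper's (very short) argument: apply Theorem~\ref{thmTreeKSec}, divide by the trivial lower bound \(\minsec{k}{T}\geq k-1\), and bound the remaining factor by \(\left(2+\frac{16}{d}\right)\Delta_0\) using the hypotheses on \(\mathcal{T}\). Your added justifications of the lower bound via contraction and of the reduction to \(k<n\) are fine and merely spell out what the paper leaves implicit.
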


Next, we extend our focus in this paper in two ways. On the one hand, we can improve the upper bound so that it becomes polylogarithmic in~$n/\diam(T)$. Moreover, we now move from trees to graphs with a given tree decomposition. 
Here, instead of bounding the width of a \ksec{k} in terms of the diameter, we define a parameter~\(r(T, \mathcal{X} )\) that roughly measures how close the tree decomposition~\( (T, \mathcal{X} )\) is to a \emph{path decomposition}, which is defined as a tree decomposition where the decomposition tree is a path. For example, suppose that we were given a graph~\(G\) with a path decomposition~\( (P, \mathcal{X} )\) of~\(G\) of width~\(t-1\). Then it is easy to see that~\(G\) allows a bisection of width at most~\(t \Delta(G)\) by walking along the path~\(P\) until we have seen~\(\frac{1}{2}n\) vertices of~\(G\) in the clusters, and then bisecting~\(G\) there. 
In other words, finding a \ksec{k} of the graph~\(G\) of small width becomes easier when there is a path in the tree decomposition of~\(G\) whose clusters contain many of the vertices of~\(G\).

For the precise definition of the parameter~\(r(T, \mathcal{X} )\), 
consider a tree decomposition~\( (T, \mathcal{X} )\) of a graph~\(G=(V,E)\) with~\({ \mathcal{X}  = (X^i)_{i \in V(T)}}\). The \emph{relative weight of a heaviest path} in~\((T,\X)\) is denoted by
\begin{equation*}
  r(T,\X) := \; \frac{1}{n} \;  \max_{P\subseteq T \text{ path}} \left| \textstyle\bigcup_{i \in V(P)} X^i\right|,
\end{equation*}
where~\(n\) denotes the number of vertices of~\(G\). Observe that~\(\frac{1}{n} \leq r(T,\X) \leq 1\). Moreover, define the \emph{size} of~\((T,\X)\) as~\( \| (T, \mathcal{X} ) \| := |V(T)| + \sum_{i\in V(T)} |X^i| \), which roughly measures the encoding length of~\((T,\X)\).

\begin{thm}
  \label{thmGenKSec}
  For every integer~\(k \geq 2\), for every graph~\(G\) and every tree decomposition~\( (T,\X) \) of~\(G\) of width at \mbox{most~\(t-1\)},  a \ksec{k}~\( (V_1, \ldots, V_k) \) in~\(G\) with
  \[ e_G(V_1, \ldots, V_k) \ \leq \ \tfrac{1}{2} (k-1) t \Delta(G) \left( \left( \log_2 \left( \frac{1}{r(T,\X)} \right) \right)^2 + 11 \log_2 \left( \frac{1}{r(T,\X)} \right) + 24 \right)\]
  can be computed in~\(\bigO(k \| (T,\X)\| )\) time, when the tree decomposition~\((T,\X)\) is provided as input. 
\end{thm}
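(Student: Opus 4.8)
The plan is to deduce Theorem~\ref{thmGenKSec} from a single\nobreakdash-cut statement and to prove that statement by induction on $\lceil\log_2(1/r(T,\X))\rceil$. The single\nobreakdash-cut statement reads: for every graph $G$, every tree decomposition $(T,\X)$ of $G$ of width at most $t-1$, and every $1\le s\le n-1$, there is $A\subseteq V(G)$ with $|A|=s$ whose edge boundary in $G$ has size at most $\tfrac12 t\Delta(G)(\ell^2+11\ell+24)$, where $\ell:=\log_2(1/r(T,\X))$, computable in $\bigO(\|(T,\X)\|)$ time. To obtain a \ksec{k} from this, one sweeps a heaviest path $P$ of $T$ once and cuts it off at the $k-1$ places where the number of already\nobreakdash-covered vertices first exceeds $\lceil\tfrac{n}{k}\rceil,2\lceil\tfrac{n}{k}\rceil,\dots$; since in this construction any crossing edge joins two consecutive parts of the sweep, the \ksec{k} width is at most the sum of the $k-1$ single\nobreakdash-cut bounds, which is exactly the bound in the theorem. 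Everywhere the governing estimate is that cutting ``at a node $i$ of $T$'' is cheap: if $F$ is a component of $T-i$, then the set $U=\bigcup_{j\in V(F)}X^j$ has the property that every edge of $G$ with exactly one endpoint in $U$ has that endpoint in $X^i$, so re\nobreakdash-assigning $U\setminus X^i$ to either side of a partition changes the number of crossing edges by at most $\sum_{v\in X^i}\deg_G(v)\le t\Delta(G)$.

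The base case is $r(T,\X)=1$: here the clusters of a heaviest path $P$ of $T$ cover all of $V(G)$, every off\nobreakdash-path cluster is contained in the path\nobreakdash-cluster at its point of attachment, so the count of covered vertices grows by at most $t$ between consecutive clusters of $P$; hence for any target $s$ one stops at a cluster $X^i$ where the count is within $t$ of $s$ and fine\nobreakdash-tunes by moving part of $X^i$, and the resulting $A$ has its boundary inside at most two clusters of $P$, of size at most $2t\Delta(G)\le 12\,t\Delta(G)$. For the inductive step, put $W=\bigcup_{i\in V(P)}X^i$ with $|W|=rn$, $r<1$, and sweep along $P$ as before; the only obstruction to realising the target $s$ at the right size is that some \emph{hanging subgraph} $G'$ --- the subgraph induced by the clusters of a component of $T-i_j$ disjoint from $P$ --- straddles $s$. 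Two facts feed the recursion: every vertex of $G'$ outside its one attachment cluster lies in $V(G)\setminus W$, and every path in the decomposition tree of $G'$ is a path in $T$ and hence covers at most $rn$ vertices. One passes to the lighter side of $G'$ (so that the size to be realised inside $G'$ is at most $\tfrac12|V(G')|$), recurses there, and combines the recursively produced cut with the $\bigO(1)$ separators used on $P$, charging an additional $\bigO(t\Delta(G))$ for the present level. The running time $\bigO(k\|(T,\X)\|)$ follows because a heaviest path, the prefix counts of covered vertices, and all hanging subgraphs together with their own heaviest paths can be produced by a constant number of post\nobreakdash-order traversals of $T$ and reused across the $k-1$ cuts.

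The crux --- and the reason the dependence ends up polylogarithmic in $n/\operatorname{diam}(T)$ rather than linear as in Theorem~\ref{thmTreeKSec} --- is to show that this recursion terminates within $\log_2(1/r(T,\X))$ levels while its per\nobreakdash-level cost grows only linearly in the level; the levels then contribute a quadratic polynomial in $\ell$, with the lower\nobreakdash-order coefficients $11$ and $24$ bookkeeping the $\bigO(1)$ separators and the size\nobreakdash-rounding at each level, and the $\tfrac12(k-1)t\Delta(G)$ prefactor coming from summing the $k-1$ single cuts. The difficulty is that recursing naively into the straddling hanging subgraph using its own heaviest path need not make progress: a hanging subgraph can have a relative heaviest\nobreakdash-path weight far below that of its parent. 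The recursive instance must therefore be chosen so that its relative heaviest\nobreakdash-path weight at least doubles --- by passing to the lighter side, by bundling several cheap cuts at a common node of $T$, and by controlling \emph{where} along $P$ the descent happens --- and carrying this out uniformly, together with the exact size bookkeeping that keeps the parts of the \ksec{k} to within one vertex of each other, is the technical heart of the proof.
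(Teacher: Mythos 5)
There is a genuine gap, in two places. First, the assembly of the \ksec{k}. Your plan is to prove only an existence statement (``for every $s$ there is some $A$ with $|A|=s$ and small boundary'') and then to obtain the \ksec{k} from one sweep along a heaviest path by cutting at $k-1$ prefix sizes. For the ``sum of the $k-1$ single-cut bounds'' argument to bound the width of a genuine partition, the $k-1$ sets realising the prefix sizes must be \emph{nested}; a bare existence statement does not give that, and your own fix for a straddling hanging subgraph (an independent recursive call inside it) can destroy nestedness, e.g.\ when two consecutive section boundaries fall into the same hanging subgraph. (Also, your justification ``any crossing edge joins two consecutive parts of the sweep'' is false -- an edge lies in a single cluster, but in a $P$-labeling its endpoints' labels can be far apart; the weaker fact that each cut edge is counted by at least one prefix cut is what actually saves the sum.) The paper avoids all of this by proving \cref{lemmaCutPresR}: a cut $(B,W)$ with $|B|=m$, the stated width, \emph{and} $r(T',\X')\geq r(T,\X)$ for the tree decomposition induced by $G[W]$, and then peeling off the $k$ parts one at a time. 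This preservation property is the actual new content of the theorem, and your proposal contains neither it nor a working substitute.

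Second, the single-cut bound itself. The paper does not re-derive the polylogarithmic exact-size cut by recursion here; it imports it as a black box (\cref{thmGenTreeDiam}, from the earlier bisection paper) and reduces to it as follows: find $v$ with $d_R(v,v+m)=\Bfloor{rm}$, use the approximate-cut lemma (\cref{lemmaApproxCutGen}) to split the one straddling hanging set $S_j$, form $\tilde V$ with $m\le|\tilde V|\le 2m$ containing all $\Bfloor{rm}$ counted path-cluster vertices, delete the edges $E_G(j)$ to disconnect $G[\tilde V]$, and glue the two induced tree decompositions into one with $r(\tilde T,\tilde\X)\ge\tfrac12 r(T,\X)$; applying \cref{thmGenTreeDiam} to this instance (note the relative weight is \emph{halved}, not doubled) plus $3t\Delta(G)$ for the cluster deletions is precisely what turns $(\ell^2+9\ell+8)$ into $(\ell^2+11\ell+24)$. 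Your sketch instead proposes to run the depth-$\log_2(1/r)$ doubling recursion from scratch and explicitly defers ``carrying this out uniformly'' as ``the technical heart of the proof''; that deferred step is the entire content of the imported theorem, so as written the proposal proves neither the single-cut bound nor the reduction to it.
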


Again, as in the case of trees in Theorem~\ref{thmTreeKSec} and Corollary~\ref{corTreeKSec}, a constant-factor approximation for a certain class of tree-like graphs is obtained. More precisely, fix~\(\Delta_0 \in \mathbb{N}\),~\(0 < r \leq 1\),~\(t_0 \in \mathbb{N}\) and define~\(c := \frac{1}{2} t \left( \left(\log_2 \left( \frac{1}{r} \right)\right)^2 + 11 \log_2 \left( \frac{1}{r}\right) + 24 \right) \Delta_0  \). Consider a class~\(\mathcal{G}\) of connected graphs such that every graph~\(G \in \mathcal{G}\) satisfies~\(\Delta(G) \leq \Delta_0\) and allows a tree decomposition~\((T,\X)\) of width at most~\(t_0-1\) and~\(r(T,\X) \geq r\). Then, there is a \mbox{\(c\)-approximation} for the Minimum \Ksec{k} Problem restricted to the graph class~\(\mathcal{G}\).


\subsection{Related Work}
\label{subsecRelWork}

The results presented here rely on our earlier work concerning the Minimum Bisection Problem, in particular the following theorem. 

\begin{thm}[Theorem~1 in \cite{PaperGenTreeDiam}]
  \label{thmTreeBis}
  For every tree~\(T\) a bisection~\((B, W)\) in~\(T\) with
  \begin{align*}
    e_T(B, W) \ \leq \ \frac{8 n}{\diam(T)} \Delta(T)
  \end{align*}
  can be computed in~\(\bigO(n)\) time. 
\end{thm}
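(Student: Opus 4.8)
The plan is to combine a single sweep along a longest path of \(T\) with a recursive treatment of any heavy subtree that sweep runs into; no result stated earlier in this excerpt is needed. Write \(D:=\diam(T)\) and \(\Delta:=\Delta(T)\), and assume \(n\) is even (for odd \(n\) aim for parts of sizes \(\lfloor n/2\rfloor\) and \(\lceil n/2\rceil\), which changes nothing) and \(D\ge1\). Fix a longest path \(Q=v_0v_1\cdots v_D\) in \(T\), found by two graph searches in \(\bigO(n)\) time. Deleting the \(D\) edges of \(Q\) splits \(T\) into subtrees \(T_0,\dots,T_D\) with \(v_j\in V(T_j)\) and \(\sum_j|V(T_j)|=n\), and the only edge of \(T\) between \(V(T_0)\cup\cdots\cup V(T_i)\) and the rest is \(v_iv_{i+1}\). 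Since \(Q\) is longest, \(T_j\) rooted at \(v_j\) has depth at most \(\min(j,D-j)\le D/2\), and also at most \(|V(T_j)|-1\).

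First the sweep. Let \(i\) be the largest index with \(|V(T_0)|+\dots+|V(T_i)|<n/2\), and set \(s:=n/2-\sum_{j\le i}|V(T_j)|\in(0,|V(T_{i+1})|]\). Take \(B\) to consist of \(V(T_0)\cup\cdots\cup V(T_i)\) together with a size-\(s\) subset \(S\subseteq V(T_{i+1})\); then \(|B|=n/2\) and \(e_T(B,W)\le1+e_{T_{i+1}}(S,V(T_{i+1})\setminus S)\), the ``\(1\)'' accounting for \(v_iv_{i+1}\). To pick \(S\) I would use a short auxiliary fact: in any rooted tree \(R\) of depth \(d\) and maximum degree at most \(\Delta\), for every \(0\le s'\le|V(R)|\) a greedy top-down descent---at each node, include entire subtrees of its children until one more would overshoot \(s'\), then recurse into that child---produces in \(\bigO(|V(R)|)\) time a set \(S\) with \(|S|=s'\) and \(e_R(S,V(R)\setminus S)\le(d+1)\Delta\), because each of the \(\le d+1\) descent levels leaves at most \(\Delta-1\) child-subtrees of a node on the far side. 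Applying this to \(R=T_{i+1}\) gives, unconditionally,
\[ e_T(B,W)\ \le\ 1+\bigl(\min(|V(T_{i+1})|-1,\ D/2)+1\bigr)\Delta\ \le\ 1+\bigl(\tfrac{D}{2}+1\bigr)\Delta, \]
which already settles the theorem when \(D\lesssim\sqrt{n}\) (then \(\tfrac{D}{2}\Delta\le\tfrac{8n}{D}\Delta\)); and if the subtree \(T_{i+1}\) the sweep lands in is small, say \(|V(T_{i+1})|\le4n/D\), the same estimate gives \(e_T(B,W)\le1+(4n/D+1)\Delta\le8n\Delta/D\), using \(D\le n-1\) to swallow the lower-order terms.

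The remaining case---\(D\) large and the sweep landing in a subtree \(T_{i+1}\) with more than \(4n/D\) vertices---is where the real content, and the constant \(8\), lie, and I expect it to be the main obstacle. The natural move is to recurse: instead of applying the auxiliary fact to all of \(T_{i+1}\), re-run the sweep inside \(T_{i+1}\) along its own longest path, still targeting the residual \(s\), and descend into whichever subtree of \(T_{i+1}\) straddles \(s\); iterate. Each recursion level costs only one further edge---that level's path edge---until the level's straddling subtree is small relative to that level's longest path, at which point the auxiliary fact closes the estimate. What has to be proven is that this recursion terminates after few enough levels, morally \(\bigO(n/D)\) of them, so that the accumulated ``one edge per level'' together with the final cost stays below \(8n\Delta/D\); this calls for a structural claim that a subtree with many vertices hanging off a longest path is either shallow, hence handled cheaply by the auxiliary fact, or itself carries a long path along which the sweep is again cheap, plus a counting or potential argument bounding the depth of the heavy-subtree recursion. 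Given that, the width bound follows, and the running time is \(\bigO(n)\) because the longest path is computed once, while the sweeps and greedy descents together touch each vertex of \(T\) only a constant number of times.
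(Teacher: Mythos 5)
There is a genuine gap, and it is not only the one you flag yourself. You openly defer the main case (large diameter, large straddled subtree) to an unproven ``structural claim'' plus a ``counting or potential argument,'' so the proof is incomplete exactly where you say the content lies. But the problem is worse: the overall strategy cannot be repaired, because every black set you ever construct has the form $V(T_0)\cup\cdots\cup V(T_i)\cup S$ with $S$ an $s$-subset of the single straddled subtree $T_{i+1}$ --- a \emph{prefix} of the path order --- and there are trees on which every set of this form has boundary $\omega\!\left(\frac{n}{D}\Delta\right)$. Take the paper's own example from Section~1.3, attached in the middle: let $T$ consist of a path $v_0\cdots v_{n/2}$ together with a perfect ternary tree $T'$ on roughly $n/2$ vertices whose root is joined to $v_{n/4}$. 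Then $Q$ is the path, $D=n/2$, the sweep stops with $T_{i+1}$ equal to $T'$ plus $v_{n/4}$, and $s=n/4$, i.e., you must place exactly half of the ternary tree into $B$. Every $n/4$-subset of a perfect ternary tree on $n/2$ vertices has edge boundary $\Omega(\log n)$ (this is the same fact the paper invokes when it says a bisection of $T'$ has width $\Omega(\log n)$), so \emph{every} cut your method can output has width $\Omega(\log n)$, while the theorem promises $\frac{8n}{D}\Delta=16\Delta=O(1)$ here. Recursing inside $T_{i+1}$ along its own longest path does not help: the obstruction is the level-one commitment to take all of $T_0,\dots,T_i$ and nothing of $T_{i+2},\dots,T_D$, after which no choice of $S$ can be cheap. (The tree that achieves width $2$ is $B=\{v_j: j\neq n/4\}$, which is a cyclic interval of the labeling but not a prefix.)

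The missing idea is the averaging over starting points that the paper uses both in the footnote describing this very algorithm and in the proof of Lemma~\ref{lemmaTreeCutPresDiam}: fix a $P$-labeling, consider all $n$ cyclic windows $\{v+1,\dots,v+n/2\}$, and use the identity $\sum_{\ell}d_P(x_{\ell-1},x_\ell)=\frac{n}{2}\,|V_P|$ to find a window whose two boundary points are well placed relative to the path (in the example above, a window consisting entirely of path vertices). Only after this choice does one pay for at most two ``boundary'' subtrees, and only then does the recursion (on the relative diameter of the straddled piece, not on its raw size) close with the claimed constant. Your cases $D\lesssim\sqrt n$ and $|V(T_{i+1})|\le 4n/D$ are handled correctly, and the greedy-descent auxiliary fact is fine, but without the shift/averaging step the argument provably cannot reach the stated bound.
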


Although Theorem~\ref{thmTreeKSec} and Theorem~\ref{thmTreeBis} look quite similar, it does not seem possible to directly apply Theorem~\ref{thmTreeBis} to yield a recursive construction of a \mbox{\(k\)-section} that satisfies the bound presented in Theorem~\ref{thmTreeKSec}.  Indeed, it is known that, even when~\(k\) is a power of~\(2\), the natural approach to construct a \mbox{\(k\)-section} in a graph  by recursively constructing bisections can give solutions far from the optimum, even when a minimum bisection is used in each step, see~\cite{SimonTeng}. Furthermore, in the setting that we are considering here, i.e.,~bounded-degree trees with linear diameter, nothing is known about the diameter in the two subgraphs that are produced by Theorem~\ref{thmTreeBis}. So, after the first iteration, the diameter of one part of the bisection could be as low as~\( \mathcal{O}( \log n )\), and indeed such parts can be produced by the algorithm contained in Theorem~\ref{thmTreeBis}. For example, consider the tree~\(T\) obtained from a perfect ternary tree~\(T'\) on~\(\frac{1}{2}n\) vertices and a path~\(P'\) on~\(\frac{1}{2}n\) vertices by inserting an edge joining the root~\(r\) of~\(T'\) and a leaf of~\(P'\). The algorithm%
\footnote{To give some more details, this algorithm computes a longest path~\(P\) in~\(T\), which must contain all vertices from~\(P'\) as well as~\(r\). Then, it computes a \mbox{\(P\)-labeling}, as introduced in~\cref{subsecTreeLemma} ahead, which can label the vertices of~\(T'\) with~\(1, \ldots, \frac{1}{2}n\) and the vertices in~\(P'\) with~\(\frac{1}{2} n+1, \ldots, n\). Then, the algorithm checks whether there is a vertex~\(v \in V(P)\) such that~\(v+\frac{1}{2}n \in V(P)\) where we identified the vertices with their labels. This is the case for~\(v=r\) and, hence, the bisection~\((B,W)\) with~\(B= V(P')\) and~\(W=V(T')\) is output.  For further details see Section~6.1 in \cite{thesisTina}.} contained in Theorem~\ref{thmTreeBis} 
can output the bisection~\((B,W)\) with~\(B= V(P')\) and~\(W=V(T')\), which is in fact the unique minimum bisection in~\(T\). In the next round, a bisection in~\(T'\) is needed, which has width~\(\Omega( \log n)\). Thus, a \ksec{4} of width~\(\Omega(\log n)\) is obtained, whereas \cref{thmTreeKSec} promises that~\(T\) allows a \ksec{4} of constant width. 

Observe that, when only the universal bound~\(\Omega(\log n)\) is available for the diameter of a bounded-degree tree on~\(n\) vertices, then \cref{thmTreeBis} yields a bound of~\(\bigO \left( \frac{n}{\log n} \right)\) for the width of a bisection.

\subsection{Further Remarks}

The results presented in~\cref{thmTreeKSec} and~\cref{thmGenKSec} do not only hold for \ksec{k}s but also for cuts~\((V_1, V_2, \ldots, V_k)\), i.e., partitions of the vertex set of the considered graph, where the sizes of the sets are specified as input. Furthermore, the bound in \cref{thmTreeKSec} can be improved to 
\begin{equation}
  \label{eqTreeKSecImpr}
   e_T (V_1, V_2, \ldots, V_k) \ \leq \  \tfrac{1}{2}(k-1)  \left( \left( \log_2 \left( \frac{n}{\diam(T)}  \right) \right)^2 + 9 \log_2 \left( \frac{n}{\diam(T)} \right) +18  \right) \Delta(T).
\end{equation}
Observe that this is slightly stronger than the bound implied by \cref{thmGenKSec} and the fact that every tree~\(T'\) allows a tree decomposition~\((T,\X)\) of width at most one with~\(r(T,\X) \geq \frac{1}{n} (\diam(T)+1) \) and~\(\|(T,\X)\| = \bigO(n)\). Moreover, extensions to \ksec{k}s in trees with weighted vertices have recently been investigated, see~\cite{BScThesisFabian}.

\subsection{Organization of the Paper}

\cref{secPrelim} introduces some notation for cuts in general graphs as well as some tools for trees, which will then be used in \cref{secProofTree} to show \cref{thmTreeKSec}. Moreover, in \cref{subsecTreeRemarks}, it is argued that the bound on the width of the \ksec{k} in \cref{thmTreeKSec} can be improved as claimed in~\eqref{eqTreeKSecImpr}.  \cref{secProofGen} concerns tree-like graphs. First, in \cref{subsecPrelTreeDec}, the notation for tree decompositions is settled and selected tools for tree-like graphs are presented. Second, the proof for \cref{thmGenKSec} is given in \cref{subsecGenProofThm}-\ref{subsecGenAlgo}. 
Since the proofs in \cref{secProofTree} and \cref{secProofGen} follow the same ideas, we do not repeat the full details for the case of tree-like graphs in \cref{secProofGen} but focus on the aspects that become more involved when dealing with a tree decomposition and refer to \cref{secProofTree} whenever possible.

\section{Preliminaries}
\label{secPrelim}

First, for~\(k \in \mathbb{N}\), define~\([k]:= \{1,2, \ldots, k\}\).  Moreover, for a real~\(c\) denote by~\(\Bfloor{c}\) the largest integer~\(\leq c\) and denote by~\(\Bceil{c}\) the smallest integer~\(\geq c\).  Consider an arbitrary graph~\(G=(V,E)\) on~\(n\) vertices. For a set~\( \emptyset \neq S \subseteq V\), we use~\(G[S]\) to denote the subgraph of~\(G\) induced by~\(S\) and for~\(S \subsetneq V\) we define~\(G-S := G[V\setminus S]\). A \emph{cut} in~\(G\) is a partition~\((V_1, V_2, \ldots, V_{k})\) of~\(V\), where~\(k \in \mathbb{N}\) is arbitrary and empty sets are allowed. An edge~\(e=\{v,w\}\) of~\(G\) is \emph{cut} by a cut~\((V_1, \ldots, V_k)\) if there are distinct~\(\ell, h \in [k]\) with~\(v \in V_{\ell}\) and~\(w \in V_h\). The \emph{width} of a cut~\((V_1, \ldots, V_k)\) in~\(G\) is defined as the number of edges of~\(G\) that are cut by~\((V_1, \ldots, V_k)\) and is denoted as~\(e_G(V_1, \ldots, V_k)\). For~\(k \in \mathbb{N}\), a \emph{\ksec{k}}~\((V_1, V_2, \ldots, V_k)\) in~\(G\) is a cut~\((V_1, \ldots, V_k)\) in~\(G\) with~\(\Bfloor{\frac{n}{k}} \leq |V_{\ell}| \leq \Bceil{\frac{n}{k}}\) for all~\(\ell \in [k]\).  A \ksec{k}~\((V_1, \ldots, V_k)\) in~\(G\) that satisfies~\(e_G(V_1, \ldots, V_k) \leq e_G(V_1', \ldots, V_k')\) for all \ksec{k}s~\((V_1', \ldots, V_k')\) in~\(G\) is called a \emph{minimum \ksec{k}} in~\(G\) and its width is denoted by~\(\minsec{k}{G}\).

Recall that the \emph{diameter} of a tree~\(T\) is the length of a longest path~\(P\) in~\(T\), i.e.,~\(\diam(T) =|E(P)|\). In the following, we need to compare the diameter of trees with different numbers of vertices and during our construction also non-connected forests may arise.  Consider a forest~\(G\) on~\(n\) vertices and denote by~\(G_1, \ldots, G_{\ell}\) the connected components of~\(G\). Then, the \emph{relative diameter} of~\(G\) is defined as
\begin{equation*}
  \diam^*(G) \ := \ \frac{1}{n} \sum_{h \in [\ell]} (\diam(G_h)+1).
\end{equation*}
The term~\(\diam(G_h)+1\) denotes the number of vertices on a longest path in the component~\(G_h\) and for a tree~\(T\) the relative diameter equals the proportion of vertices on a longest path in~\(T\). Using this notation, we can state the version of~\cref{thmTreeBis} which we will employ in \cref{secProofTree}. It follows from Theorem~1 in~\cite{PaperGenTreeDiam} and the comments in Section~1.4 there. When considering a cut in a graph~\(G\) with exactly two sets, we use~\(B\) and~\(W\) for these sets and refer to them as the black and the white set of the cut. 

\begin{thm}[similar to Theorem~1 in \cite{PaperGenTreeDiam}]
  \label{thmTreeDiam}
  For every forest~\(G\) on~\(n\) vertices, for every~\(m \in [n]\), a cut~\((B,W)\) with~\(|B| =m \) and
  \[e_G(B,W) \leq \frac{8 }{\diam^*(G)} \Delta(G)\]
  can be computed in~\(\bigO(n)\) time. 
\end{thm}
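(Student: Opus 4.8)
The plan is to obtain this statement from the tree case, which is essentially already in our earlier work, by a short reduction that glues the components of the forest into a single tree. Concretely, Theorem~1 of~\cite{PaperGenTreeDiam} together with the comments in its Section~1.4 provides, for every tree~$T'$ on~$n'$ vertices and every prescribed size~$m'\in[n']$, a cut~$(B',W')$ of~$T'$ with~$|B'|=m'$ and~$e_{T'}(B',W')\le\tfrac{8}{\diam^*(T')}\,\Delta(T')$, computable in~$\bigO(n')$ time; since~$\diam^*(T')=(\diam(T')+1)/n'$, this is exactly the claim when~$G$ is a tree. So the remaining task is to handle a possibly disconnected forest.

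First I would dispose of the degenerate case~$\Delta(G)\le1$ on its own: here every component of~$G$ is a~$K_1$ or a~$K_2$, hence~$\diam^*(G)=\tfrac1n\sum_h(\diam(G_h)+1)=1$ and the bound to beat is~$8\Delta(G)$. Filling~$B$ greedily with whole components and, if~$m$ is not reached exactly that way, splitting a single remaining~$K_2$, produces a cut of size exactly~$m$ that cuts at most one edge, which is~$\le8\Delta(G)$, in~$\bigO(n)$ time. From now on assume~$\Delta(G)\ge2$.

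For the main case, let~$G_1,\dots,G_\ell$ be the components of~$G$. In each~$G_h$ I would compute a longest path~$P_h$ with endpoints~$a_h,b_h$ (two BFS/DFS passes per component, hence~$\bigO(n)$ in total), and form the graph~$T$ on~$V(G)$ by adding the~$\ell-1$ edges~$\{b_h,a_{h+1}\}$ for~$1\le h\le\ell-1$. The points to verify are: (i)~$T$ is a tree on~$n$ vertices; (ii)~$\Delta(T)=\Delta(G)$, because an endpoint of a longest path in a tree is a leaf of its component (otherwise the path extends), so in~$T$ such vertices have degree at most~$2\le\Delta(G)$, while every other vertex keeps its degree from~$G$; and (iii) concatenating~$P_1,\dots,P_\ell$ along the new edges is a path in~$T$ with~$\sum_h\diam(G_h)+(\ell-1)=D-1$ edges, where~$D:=\sum_h(\diam(G_h)+1)=n\,\diam^*(G)$, so that~$\diam(T)\ge D-1$ and therefore~$\diam^*(T)=\tfrac{\diam(T)+1}{n}\ge\diam^*(G)$. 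Applying the tree statement above to~$T$ and~$m\in[n]$ yields a cut~$(B,W)$ with~$|B|=m$, and since the~$\ell-1$ added edges do not belong to~$G$ we get
\[ e_G(B,W)\ \le\ e_T(B,W)\ \le\ \tfrac{8}{\diam^*(T)}\,\Delta(T)\ \le\ \tfrac{8}{\diam^*(G)}\,\Delta(G), \]
all within~$\bigO(n)$ time, as required.

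The one step I expect to need genuine care is~(ii): the gluing must not raise the maximum degree, which is why it is essential that longest-path endpoints are leaves, and why the forests with~$\Delta(G)\le1$ --- where a connecting edge would create a vertex of degree~$2>\Delta(G)$ --- have to be peeled off first. Everything else is bookkeeping; in particular~(iii) is just the observation that~$\diam(T)+1\ge D=n\,\diam^*(G)$, which is precisely what makes~$\diam^*$ the parameter under which the reduction loses nothing.
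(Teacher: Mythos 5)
Your proposal is correct and follows essentially the same route as the paper: the forest case is reduced to the tree case of Theorem~1 in \cite{PaperGenTreeDiam} by adding edges that join the ends of longest paths in consecutive components, which preserves the maximum degree (longest-path endpoints are leaves) and does not decrease the relative diameter, so that $e_G(B,W)\le e_T(B,W)$ gives the bound. Your explicit treatment of the degenerate case $\Delta(G)\le 1$ and the verification that $\diam^*(T)\ge\diam^*(G)$ are exactly the details the paper leaves implicit in \cref{subsecTreeAlgo}.
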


The previous theorem allows to cut off an arbitrary number of vertices in a bounded-degree tree and guarantees a small cut width if the diameter is large. The next tool relaxes the size-constraint on the set~\(B\). Let~\(G =(V,E)\) be a graph. For an integer~\(m\), a cut~\((B,W)\) in~\(G\) is called an \emph{approximate \cut{m}} if \( \frac{1}{2}m \leq |B| \leq m\). The next lemma states that every bounded-degree tree allows an approximate cut of small width, even when the diameter is small. 

\begin{lemma}[approximate cut in forests]
  \label{lemmaApproxCutTree}
  Let~\(T\) be a tree on~\(n\) vertices and fix a vertex~\(v \in V(T)\). For every integer~\(m \in [2n-2]\), an approximate \cut{m}~\((B,W)\) with~\(e_T(B,W) \leq \Delta(G)\) and~\(v \in W\) can be computed in~\(\bigO(n)\) time. 
\end{lemma}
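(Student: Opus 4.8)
The plan is to root the tree $T$ at the fixed vertex $v$ and exploit the classical fact that in any rooted tree there is an edge $e = \{u, p(u)\}$ (with $p(u)$ the parent of $u$) such that the subtree $T_u$ hanging below $u$ satisfies $\frac{1}{2}m \le |V(T_u)| \le m$, \emph{provided} such a size is attainable — i.e.\ provided $T$ has at least $\tfrac12 m$ vertices, which is where the hypothesis $m \le 2n-2$ enters. Concretely, I would walk down from the root: start at $v$ and repeatedly move to the child whose subtree is largest, stopping at the first vertex $u$ for which $|V(T_u)| \le m$. Since each child's subtree is strictly smaller than its parent's, and since the root's subtree has $n \ge \tfrac12 m + 1 > \tfrac12 m$ vertices (using $m \le 2n-2$), the vertex $u$ where we first drop to size $\le m$ still has $|V(T_u)| > \tfrac12 \cdot (\text{size of its parent's subtree})/(\text{something})$ — more carefully, the parent $p(u)$ had subtree size $> m$, and removing all \emph{other} children's subtrees from that count, the child $u$ we picked (the largest) has $|V(T_u)| \ge (|V(T_{p(u)})| - 1)/\deg$, which is not immediately $\ge \tfrac12 m$. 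So the cleaner argument is: among the children of $p(u)$, process them one at a time, accumulating vertices; since the total over all of $p(u)$'s descendants exceeds $m$, at some point the running sum first exceeds $\tfrac12 m$, and the increment that caused this is a single child-subtree of size $\le m$ (each individual child subtree has size $< |V(T_{p(u)})| $, but we need $\le m$, which holds because we only reach this case when every child subtree individually has size $\le m$). Then set $B = V(T_u)$ (or the accumulated union of a few child subtrees, all within the same subtree $T_{p(u)}$, forming a connected-at-$p(u)$ or forest piece) and $W = V(T) \setminus B$; since $v$ is the root and is never placed in $B$, we get $v \in W$.

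The key steps in order: (1) root $T$ at $v$ and compute all subtree sizes $|V(T_x)|$ in $\bigO(n)$ time by a single post-order traversal; (2) descend from $v$ along heaviest-child edges until the current subtree size first drops to $\le m$, arriving at a vertex $u$ with parent $w := p(u)$ satisfying $|V(T_w)| > m \ge |V(T_u)|$ and, by the stopping rule, every child $c$ of $w$ has $|V(T_c)| \le |V(T_u)| \le m$ (since $u$ was a heaviest child of $w$); (3) greedily add child-subtrees of $w$ to a set $B$, in any order, stopping the moment $|B|$ first exceeds $\tfrac12 m$ — the last added subtree $T_c$ has $|V(T_c)| \le m$, and before adding it $|B| \le \tfrac12 m$, so afterward $\tfrac12 m < |B| \le \tfrac12 m + m$; this overshoots $m$, so instead stop the moment $|B| \ge \tfrac12 m$ and note $|B| \le \tfrac12 m + |V(T_c)|$ is not yet tight — the honest bound is: stop at the first moment $|B| \ge \tfrac{1}{2}m$; then either $|B| \le m$ (done) or the single subtree that pushed us over had size $> \tfrac12 m$, in which case \emph{that subtree alone} is an approximate \cut{m}, and we recurse into it (or rather, restart the descent inside it). This recursion terminates because the subtree strictly shrinks. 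Finally, (4) the cut width: $B$ is a union of complete subtrees all hanging off the single vertex $w$, so the only edges of $T$ leaving $B$ are the $\le \deg_T(w) \le \Delta(T)$ edges from $w$ to those child-roots that lie in $B$ — hence $e_T(B,W) \le \Delta(T)$.

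The main obstacle — and the part deserving the most care — is the size bookkeeping in step (3): reconciling the constraint $\tfrac12 m \le |B| \le m$ when individual child-subtrees can be anywhere from tiny up to size $m$. The greedy accumulation works only if one handles correctly the case where a single child-subtree $T_c$ of $w$ has size in $(\tfrac12 m, m]$ — then $B = V(T_c)$ works outright — versus the case where all child-subtrees are small ($\le \tfrac12 m$), where the running sum lands in $[\tfrac12 m, m]$ cleanly because each step increments by at most $\tfrac12 m$, and versus the case where $|V(T_u)| > \tfrac12 m$ but we never needed to look at $w$ at all. A clean way to organize this is to phrase the whole thing as a single loop-with-recursion: "descend to the first subtree of size $\le m$; if its size is also $\ge \tfrac12 m$, output it; otherwise it is too small, so back up to the parent $w$ and greedily union sibling subtrees until crossing $\tfrac12 m$ — each sibling has size $\le m$, and since we are now adding pieces each of size $\le$ (the too-small subtree's size) $< \tfrac12 m$... " — wait, siblings need not be smaller than $u$'s subtree, only $\le$ it since $u$ is heaviest; so each sibling has size $\le |V(T_u)| < \tfrac12 m$, hence the running union crosses $\tfrac12 m$ by an increment $< \tfrac12 m$ and lands in $[\tfrac12 m, m)$. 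That resolves it. I would present the argument in exactly this order, noting that the descent and the sibling-accumulation together touch each vertex $\bigO(1)$ times, giving the $\bigO(n)$ running time, and that $v$ stays in $W$ throughout since $B$ is always contained in a proper subtree not containing the root.
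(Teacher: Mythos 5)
Your proposal is essentially the paper's own argument: root \(T\) at \(v\), locate the vertex \(w\) whose subtree still has more than \(m\) vertices while every child subtree has at most \(m\), and then either take a single child subtree of size at least \(\tfrac12 m\) or greedily union child subtrees (each of size less than \(\tfrac12 m\)) until the total first reaches \(\tfrac12 m\); the cut width bound \(\deg_T(w)\le\Delta(T)\) and the conclusion \(v\in W\) follow exactly as you say, and your case analysis in step (3) correctly resolves the size bookkeeping. The one boundary case you do not cover is \(m\ge n\) (which is permitted, since \(m\) ranges up to \(2n-2\), and is actually needed where the lemma is applied): there the descent never starts, because the root's own subtree already has size \(n\le m\), so there is no vertex whose subtree ``first drops'' below \(m\) and no parent \(w\) to back up to. The paper disposes of this (and of \(m=n-1\)) separately by setting \(B:=V\setminus\{v\}\), which has \(n-1\ge\tfrac12 m\) vertices by the hypothesis \(m\le 2n-2\) and cuts only the \(\deg_T(v)\le\Delta(T)\) edges at \(v\); adding this one line closes the gap.
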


The previous lemma is similar to Lemma~7 in~\cite{PaperGenTreeDiam} but as the bound on~\(e_T(B,W)\) claimed here is smaller we present a sketch of its proof.

\begin{proof}[Sketch of Proof for \cref{lemmaApproxCutTree}.]
  Let~\(T=(V,E)\), \(n\), \(m\), and~\(v\) be as in the statement. If~\(m \geq n-1\), define~\(B := V\setminus\{v\}\). Otherwise, root~\(T\) in~\(v\) and, for~\(w \in V\), denote by~\(D_w\) the set of descendants of~\(w\). It is easy to check that there is a vertex~\(x\) such that~\(|D_x| > m\) and~\(|D_y| \leq m\) for all children~\(y\) of~\(x\). If there is a child~\(y\) of~\(x\) with~\(|D_y| \geq \frac{1}{2} m\), define~\(B:= D_y\). Otherwise, the set~\(B\) can be constructed greedily from the sets~\(D_y\) where~\(y\) is a child of~\(x\). Then, the cut~\((B,W)\) with~\(W:= V\setminus B\) has the desired properties and can easily be computed in linear time. 
\end{proof}

\section{Minimum \(k\)-Section in Trees}
\label{secProofTree}

The aim of this section is to prove \cref{thmTreeKSec} about \ksec{k}s in trees. \cref{subsecTreeThm} introduces the main lemma that immediately implies the existence part of the desired theorem and \cref{subsecTreeLemma} presents the proof of the main lemma. All algorithmic aspects of \cref{thmTreeKSec} are presented in \cref{subsecTreeAlgo}. Finally, \cref{subsecTreeRemarks} argues that the bound on the width of the \ksec{k} in \cref{thmTreeKSec} can be improved as stated in~\eqref{eqTreeKSecImpr}.

\subsection{Proof of \cref{thmTreeKSec}}
\label{subsecTreeThm}

The aim of this section is to prove our main result for trees. As mentioned in \cref{subsecRelWork}, constructing a \ksec{k} by bisecting the graph repeatedly does not yield the bound provided by \cref{thmTreeKSec}. So we follow a different approach: The main idea is to cut off one set for the \ksec{k} at a time while ensuring that the relative diameter of the remaining forest does not decrease. This is made precise by the next lemma, which looks similar to \cref{thmTreeDiam} but is more powerful, as it contains additional information on the relative diameter of the subgraph induced by the white set of the cut. 

\begin{lemma}
  \label{lemmaTreeCutPresDiam}
  For every forest~\(G\) on~\(n\) vertices and for every~\(m \in [n-1]\), there is a cut~\( (B, W)\) in~\(G\) with~\(|B| =m\), that satisfies~\( \operatorname{diam}^* ( G[W] ) \geq \operatorname{diam}^*(G)\) and 
  \[ e_G (B, W) \leq \left(2+ \frac{16}{\operatorname{diam}^*(G)} \right) \Delta(G).\]
\end{lemma}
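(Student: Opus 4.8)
The plan is to choose the $m$ vertices going into $B$ so that the longest paths of $G$ survive, up to a controlled deficit, inside $G[W]$. Write $d:=\diam^*(G)$, let $G_1,\dots,G_\ell$ be the components of $G$ with $n_h:=|V(G_h)|$, fix a longest path $P_h$ in each $G_h$ with $p_h:=|V(P_h)|$, and call $S:=\bigcup_h V(P_h)$ the \emph{spine}, so $|S|=\sum_h p_h = nd$. The first step is a purely combinatorial reduction: if $B$ is chosen with $|B\cap S|\le md$, then in each $G_h$ the path $P_h$ loses only $p'_h:=|B\cap V(P_h)|$ vertices, and since $G_h$ is a tree the surviving fragments of $P_h$ lie in pairwise distinct components of $G[W]$ and contribute a total of at least $p_h-p'_h$ vertices on longest paths of $G[W]$; summing over $h$ gives at least $nd-|B\cap S|\ge nd-md$ such vertices, so $\diam^*(G[W])\ge (nd-md)/(n-m)=d$. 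Hence it suffices to produce $B$ with $|B|=m$, $|B\cap S|\le md$, and $e_G(B,W)\le(2+16/d)\,\Delta(G)$.

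Next I would build $B$ from three inexpensive moves whose costs do not involve $1/d$: (i) deleting a whole component $G_h$ costs $0$ cut edges and uses $n_h$ toward the size and $p_h$ toward the spine budget; (ii) inside one component, deleting a contiguous block $v_a,\dots,v_b$ of its longest path together with all subtrees hanging off it costs at most $2$ cut edges and uses $b-a+1$ of the spine budget; (iii) inside one component, deleting any family of whole subtrees hanging off a single spine vertex costs at most $\Delta(G)$ cut edges and nothing from the spine budget. Deleting whole components greedily (ordered so that the spine budget is respected, which needs a small exchange argument) reduces the task to removing some $r<n_h$ vertices from a single remaining component $G_h$ under a residual spine budget $\beta$; a discrete intermediate-value argument then positions the block in move~(ii) so that it uses at most $\beta$ of the spine while leaving a small, explicitly bounded residual, and move~(iii) absorbs that residual cheaply — or, if it is too large for~(iii), \cref{thmTreeDiam} is applied to cut exactly the residual off a subforest of $G_h$ that has been kept together with a piece of $P_h$ of length at least a $d/2$-fraction of its vertex count, costing at most $\frac{8}{d/2}\Delta(G)=\frac{16}{d}\Delta(G)$ and, since that subforest's spine is part of $P_h$, staying within $\beta$. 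Summing, $e_G(B,W)\le 2\Delta(G)+\frac{16}{d}\Delta(G)$, and every ingredient (computing longest paths in a forest, sliding the block, and calling \cref{thmTreeDiam} and \cref{lemmaApproxCutTree}) runs in $\bigO(n)$ time, which gives the algorithmic claim.

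The main obstacle I anticipate is this middle stage: getting the coarse deletions (whole components plus one block) and the single residual correction to mesh so that three things hold at once — $|B|=m$ \emph{exactly}, the spine budget $md$ is never exceeded, and the one expensive \cref{thmTreeDiam} call is made on a subforest whose relative diameter is provably at least $d/2$. In particular one must choose \emph{which} components are deleted wholesale, and where the block sits, so that enough of the spine mass and of the spine budget is concentrated in the single component where fine tuning happens, and so that the $\bigO(\Delta(G))$ and $\bigO(\Delta(G)/d)$ costs are each paid at most once over the whole construction rather than once per component. Handling the degenerate regimes (for instance $d$ close to $1$, or $m$ close to $n$, or $md<1$ so that no spine vertex may be removed at all) will additionally require separate, though routine, case analysis.
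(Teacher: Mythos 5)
Your opening reduction is exactly the paper's: if $|B\cap S|\le md$ then $\diam^*(G[W])\ge (nd-md)/(n-m)=d$, and your cost budget --- at most $2\Delta(G)$ for a coarse contiguous cut plus one application of \cref{thmTreeDiam} to a subforest of relative diameter at least $d/2$, contributing $\frac{16}{d}\Delta(G)$ --- is also the paper's. But the step you yourself flag as ``the main obstacle'' (making $|B|=m$ exactly, keeping $|B\cap S|\le md$, and guaranteeing the $d/2$ bound all at once) is the actual content of the proof, and your proposed orchestration does not deliver it. In particular, deleting whole components is not a safe move: a component $G_h$ consumes $p_h$ of the spine budget while earning only $n_h d$, and $p_h/n_h$ can far exceed $d$ (take one component that is a bare path on $n/2$ vertices and another of logarithmic diameter on $n/2$ vertices; for $m=n/2$ every valid $B$ must split \emph{both} components in a forced proportion). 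No ordering or ``small exchange argument'' rescues wholesale deletion. The paper avoids component selection entirely by first joining the components end-to-end along their longest paths into a single tree with the same $\diam^*$ and $\Delta$.

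More importantly, the ``discrete intermediate-value argument'' that positions your block is left unspecified, and that is precisely where the paper's one real idea lives. It fixes a $P$-labeling of the tree (each subtree hanging off the longest path $P$ gets consecutive labels, ordered along $P$, with the path vertex labelled last), considers the $n$ cyclic windows $\{x,\dots,x+m-1\}$ of $m$ consecutive labels, observes that the number of $P$-vertices per window averages $dm$ and changes by at most $1$ per unit shift of $x$, and concludes that some window $M$ contains exactly $\Bfloor{dm}$ vertices of $P$. This single window simultaneously has size exactly $m$, respects the spine budget, and is contiguous in the labeling, so extracting it cuts at most $2\Delta(G)$ edges except possibly inside the one hanging subtree straddling a window boundary; there \cref{lemmaApproxCutTree} trims that subtree to form $\tilde V$ with $m\le|\tilde V|\le 2m$ still containing all $\Bfloor{dm}$ counted path vertices plus a disconnected piece, whence $\diam^*(G[\tilde V])\ge \frac{\Bfloor{dm}+1}{2m}\ge \frac{d}{2}$ and \cref{thmTreeDiam} extracts the final $B$ with $|B|=m$. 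Without this averaging step (or an equivalent mechanism), your construction remains a plan rather than a proof.
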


Observing that~\(\diam^*(T) \geq \frac{1}{n} \diam(T)\) now yields the existence part of \cref{thmTreeKSec}.

\subsection{Proof of \cref{lemmaTreeCutPresDiam}}
\label{subsecTreeLemma}

Consider a forest~\(G\) on~\(n\) vertices and fix an integer~\(m \in [n-1]\). The main idea is to apply \cref{thmTreeDiam} to a carefully chosen subgraph~\(\tilde{G} \subseteq G\), which then yields the set~\(B \subseteq V(\tilde{G})\) for the desired cut~\((B,W)\) in~\(G\). On the one hand, \(\tilde{G}\) needs to have large relative diameter such that the bound on the cut width provided by \cref{thmTreeDiam} is low when applied to~\(\tilde{G}\). On the other hand, the relative diameter of the graph induced by the white set of the computed cut will roughly be the relative diameter of~\(G-V(\tilde{G})\), so \(G-V(\tilde{G})\) needs to have a large relative diameter. Note that these two conditions compete against each other.

If~\(\Delta(G) \leq 2\), then~\(\diam^*(G) =1\) and a cut with the desired properties is easy to construct. So assume that~\(\Delta(G) \geq 3\). Without loss of generality, we may assume that~\(G\) is connected as otherwise edges can be added to~\(G\) to obtain a tree whose relative diameter
is equal to~\(\diam^*(G)\) and maximum degree~\(\Delta(G)\). Set~\(d := \diam^*(G)\),  let~\(P = (V_P, E_P)\) be a longest path in~\(G\) and observe that~\(|V_P| = dn\). Denote by~\(x_0\) and~\(y_0\) the two leaves of~\(P\).  When removing all edges in~\(E_P\) from~\(G\), then~\(G\) decomposes into trees, one tree~\(T_v\) for every~\(v \in V_P\). For each~\(v \in V_P\), let~\(T_v' := V(T_v) \setminus\{v\}\). For~\(x\in V(G)\), the unique vertex~\(v \in V_P\) with~\(x \in V(T_v)\) is called the \emph{path-vertex} of~\(x\). When labeling the vertices of a graph with~\( \{1,2, \ldots, n\}\), we say that the vertices in~\(V'\subseteq V(G)\) receive \emph{consecutive labels} if there are~\(\ell, \ell' \in [n]\) such that each vertex in~\(V'\) receives one label in~\( \{\ell, \ell+1, \ldots, \ell' \}\) and every label in~\(\{\ell, \ldots, \ell'\}\) belongs to exactly one vertex in~\(V'\).  A \emph{\mbox{\(P\)-labeling}} of~\(G\) is a labeling of the vertices of~\(G\) with~\(1,2,\ldots, n\) such that the following holds:
\begin{itemize}
 \item For each~\(v \in V_P\), the vertices of~\(T_v\) receive consecutive labels and~\(v\) has the largest label among all vertices in~\(T_v\). 
 \item For all \(v, w \in V_P\) with~\(v \neq w\), if~\(x_0\) is closer to~\(v\) than to~\(w\), then the label of~\(v\) is smaller than the label of~\(w\). 
\end{itemize}
Identify each vertex with its label and consider any number that differs by a multiple of~\(n\) from a label in~\([n]\) to be the same as this label.  When talking about labels and vertices, in particular when comparing them, we always refer to the integer in~\([n]\). For three vertices \(a, b, c \in V\) with~\(a \neq c\), we say that~\(b\) \emph{is between~\(a\) and~\(c\)} if~\(b=a\), \(b=c\), or if starting at~\(a\) and going along the numeration given by the labeling reaches~\(b\) before~\(c\). If~\(a=c\), then we say that~\(b\) is between~\(a\) and~\(c\) if \(b=a=c\). For example, when~\(n=10\), we say that~\(5\) is between~\(1\) and~\(7\), and~\(9\) is between~\(8\) and~\(3\). For technical reasons, we will refer to the pair~\( \{y_0, x_0\}\) as an edge of~\(G\), even though~\(G\) does not contain such an edge. For a vertex~\(v \in V_P\), the vertex~\(w \in V_P\) is the \emph{vertex after~\(v\) on~\(P\)} if the tree~\(T_w\) contains the vertex~\(v+1\). Then, \( \{v,w\}\) is called the \emph{edge after~\(v\) on~\(P\)}. Similarly, in this case,~\( \{v,w\}\) is called the \emph{edge before~\(w\) on~\(P\)} and~\(v\) is called the \emph{vertex before~\(w\) on~\(P\)}. 

For two vertices~\(x,y \in V\), the \emph{\(P\)-distance} of~\(x\) and~\(y\) is defined as
  \[ d_P(x,y) = \left| \left\{ v \in V_P \colon \, \vphantom{'} \text{\(v\) is between~\(x\) and~\(y\), \(v\neq y\)} \right\} \right|.\]
It is easy to see that 
\begin{equation}
  \label{proofLemmaCutPresDiamContin}
  \left| d_P(x,y) - d_P(x+1, y+1) \vphantom{'}\right| \ \leq \ 1 \quad\quad\quad \text{ for all~\(x,y \in V\)}.
\end{equation}
Recall that~\(x_0\) was defined to be an end of~\(P\). Now, define~\(x_{\ell} = x_0 + \ell m\) for all~\(\ell \in  [n]\). Then, \(x_n = x_0 + nm = x_0\) and
  \[ \sum_{\ell=1} ^n d_P(x_{\ell-1}, x_{\ell}) \ = \ m |V_P| \ = \ m dn.\] 
Thus, there are two vertices~\(x', x'' \in V\) with 
  \[d_P (x', x' + m) \leq dm \quad \quad \text{and} \quad \quad d_P(x'', x''+m) \geq dm.\] 
The fact that~\(d_P(x,y)\) is an integer for all~\(x\), \(y \in V\) and~\eqref{proofLemmaCutPresDiamContin} implies that there is a vertex~\(x^* \in V\) with~\(d_P (x^*,x^* + m) = \Bfloor{dm}\). Let~\(p_{x^*}\) and~\(p_{x^*+m}\) be the path-vertices of~\(x^*\) and~\(x^*+m\), respectively. Define~\(h := \min\{ p_{x^*} - x^*, p_{x^* +m}- (x^* +m)\}\). Set~\(v := x^* +h\) and note that~\(v \in V_P\) or~\(v+m \in V_P\). Furthermore, as~\(v+m\) is not counted in~\(d_P(v,v+m)\), we have \(d_P (v,v + m) = d_P(x^*, x^* +m ) = \Bfloor{dm}\). Define
  \[ M := \{u \in V\colon\, u \text{ is between~\(v\) and~\(v+m-1\)}\}. \]
  
In the following figures, the path~\(P\) will be drawn in the top and the trees~\(T_u\) for~\(u \in V_P\) are drawn underneath~\(P\). The vertices in~\(P\) that are counted in~\(d_P(v,v+m)\) will be colored gray.

\begin{figure}
  \begin{subfigure}{0.52 \textwidth}
  \begin{center}
    \includegraphics{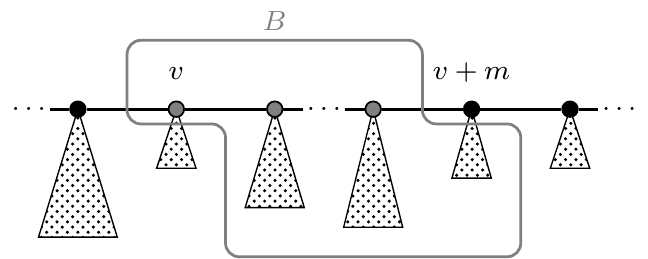}
  \end{center}
  \caption{Case 1, where~\(v \in V_P\) and~\(v+m \in V_P\).}
  \label{figTreeCutPresDiamCase1}
  \end{subfigure}
  \begin{subfigure}{0.45 \textwidth}
  \begin{center}
    \includegraphics{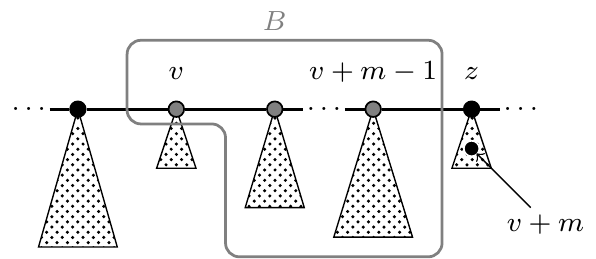}
  \end{center}
  \caption{Case 2a, where~\(v \in V_P\) and~\(v+m-1 \in V_P\).}
  \label{figTreeCutPresDiamCase2a}
  \end{subfigure}
    
  \caption{Construction of the black set in Case~1 and Case~2a.}
  \label{figTreeCutPresDiamCase1and2a}
\end{figure}

\textbf{Case 1:} \(v \in V_P\) and \(v+m \in V_P\). \\
Define~\( B := M \) and~\(W := V \setminus B\). Then~\(e_G (B,W) \leq 2\Delta(G)\), see \cref{figTreeCutPresDiamCase1}. Furthermore, \(|B| =m \)  and
  \[ \diam^*( G[W]) \ \geq \ \frac{ | V_P \cap W| } {|W|} \ = \ \frac{|V_P| - d_P(v, v+m)}{|W|} \ \geq \ \frac{ dn - dm }{n-m} \ = \ d.\]

\textbf{Case 2:} \(v \in V_P\) and \(v+m \not\in V_P\). \\
Let~\(z\) be the path-vertex of~\(v+m\). Observe that~\(z \not\in M\) as otherwise~\(V_P \subseteq M\) and~\(\Bfloor{dm} = |V_P| =dn\), which contradicts~\(m \in [n-1]\). None of the edges in~\(T_z\) is cut by~\( (M, V \setminus M)\) when~\(v+m-1 \in V_P\), i.e., when~\(v+m-1\) is the vertex before~\(z\) on~\(P\), and this case is treated separately for technical reasons. 

\textbf{Case 2a:}  \(v+m-1 \in V_P\). \\
Analogously to Case~1, the cut~\( (B,W)\) with~\(B := M\) and~\(W := V \setminus M\) satisfies all requirements, see \cref{figTreeCutPresDiamCase2a}.

\begin{figure}
  \begin{center}
    \includegraphics{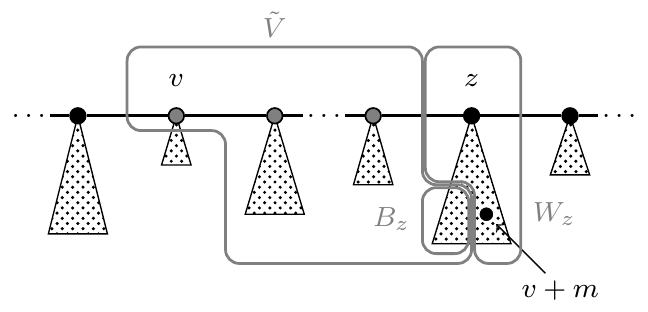}
  \end{center}
  \caption{Construction of~\(\tilde{V}\) in Case~2b, where \(v \in V_P\), \(v+m \notin V_P\), and \(v+m-1 \not\in V_P\). Note that~\(v+m\) can also lie in~\(B_z\).}
  \label{figTreeCutPresDiamCase2b}
\end{figure}

\textbf{Case 2b:} \(v+m-1 \not\in V_P\). \\
First, observe that the cut~\( (M, V\setminus M)\) might cut too many edges in the tree~\(T_z\). Moreover, the cut \( (M \cup T_z', V \setminus (M \cup T_z'))\) cuts few edges, but using \cref{thmTreeDiam} to cut off~\(m\) vertices from~\(G[M \cup T_z']\) might yield a too large bound. The reason for this is that the relative diameter of~\(G[M \cup T_z']\) can be much less than~\(d\), for example when~\(T_z'\) contains~\(\Omega(n)\) vertices and~\(m\) is small. So, instead of using~\(M \cup T_z'\), we will now define a set~\(\tilde{V} \subseteq M \cup T_z'\) such that~\( (\tilde{V}, V \setminus \tilde{V})\) cuts few edges, \(\tilde{V}\) contains all vertices counted by~\(d_P(v,v+m)\), and \(m \leq |\tilde{V}| \leq 2m\), which will ensure that~\(\diam^*(G[\tilde{V}]) \geq \frac{1}{2} d\). 

Let~\( \tilde{m} = 2 | T_z' \cap M |\), which satisfies \(2 \leq \tilde{m} \leq 2 |V(T_z)| -2\) as~\(z \not\in M\). \cref{lemmaApproxCutTree} guarantees an approximate \cut{\tilde{m}}~\((B_z,W_z)\) in~\(T_z\) with~\(z \in W_z\) and~\(e_{T_z} (B_z,W_z) \leq \Delta(G)\).  Define \( \tilde{V} = (M \setminus T_z') \cup B_z\) and note that~\(z \not\in \tilde{V}\). Furthermore, as \( \frac{1}{2} \tilde{m} \leq |B_z| \leq \tilde{m} \) and \( |\tilde{V}| =  m - \frac{1}{2}\tilde{m} + |B_z| \), we have that \(m \leq |\tilde{V}| \leq 2m \). The graph~\(\tilde{G} := G[\tilde{V}]\) consists of at least two components as~\(B_z \neq \emptyset\) and there are no edges between~\(M \setminus T_z'\) and~\(B_z \subseteq T_z'\), see \cref{figTreeCutPresDiamCase2b}. Therefore,
\begin{align*}
  \diam^*( \tilde{G} ) \ \geq \ \frac{d_P(v, v+m) +1 }{| \tilde{V} |} \ \geq \ \frac{\Bfloor{dm}+1}{2m}  \ \geq \ \frac{d}{2}.
\end{align*}
Now, \cref{thmTreeDiam} guarantees a cut~\((\tilde{B},\tilde{W})\) in~\(\tilde{G}\) with~\(|\tilde{B}| =m\) and~\(e_{\tilde{G}} (\tilde{B}, \tilde{W}) \leq \frac{16}{d} \Delta(G)\).  Let~\(B := \tilde{B}\) and \(W := V\setminus B\). Every vertex from~\(P\) that is not counted by~\(d_P(v,v+m)\) is in~\(W\) by construction. Consequently,
  \[ \diam^*(G[W]) \cdot |W| \ \geq \ |V_P| - d_P(v, v+m) \ \geq \ dn- dm,\]
which implies that \( \diam^*(G[W]) \geq d\). 

To estimate the width of the cut~\((B, W)\), consider first the cut~\((\tilde{V}, V \setminus \tilde{V})\). The cut~\( (\tilde{V}, V \setminus \tilde{V})\) cuts at most~\(\Delta(G)\) edges within~\(T_z\). Recall that~\(z \in W_z \subseteq W\). Now, if~\(v\) is the vertex before~\(z\) on~\(P\), then \(\tilde{V} = \{v\} \cup B_z \) and other than the edges in~\(T_z\) only edges incident to~\(v\) are cut by~\((\tilde{V}, V \setminus \tilde{V})\). Otherwise, at most~\(\Delta(G)-1\) edges incident to~\(v\) are cut by~\((\tilde{V}, V \setminus \tilde{V})\) as either~\(v=y_0\) or the edge after~\(v\) on~\(P\) exists and is not cut. Then, from the edges incident to~\(z\) that are cut by~\( (\tilde{V}, V \setminus \tilde{V})\), only the edge before~\(z\) on~\(P\) is not yet counted.  Consequently, \(e_G(\tilde{V}, V \setminus \tilde{V}) \leq 2 \Delta(G)\). Using that~\( (\tilde{B}, \tilde{W})\) cuts at most~\( \frac{16}{d} \Delta(G)\) edges in~\(\tilde{G} = G[\tilde{V}]\) gives the desired bound on the number of cut edges. 

\textbf{Case 3:} \(v \notin V_P\) and \(v+m \in V_P\).\\
This case is similar to Case~2b, but some arguments need to be adjusted as the labeling cannot simply be reversed to obtain a labeling with the same properties due to the requirement that each~\(u\in V_P\) receives the largest label among all vertices in~\(T_u\).  Denote by~\(z\) the path-vertex of~\(v\). As in Case~2b, let~\(\tilde{m} = 2 |T_z' \cap M|\) and let~\((B_z, W_z)\) be an approximate \cut{\tilde{m}} in~\(T_z\) with~\(z \in W_z\) and~\(e_{T_z} (B_z,W_z) \leq \Delta(G)\). For technical reasons, the case when~\(z=v+m\) is treated separately.

\textbf{Case~3a:} \(z= v+m\).\\
Then,~\(d_P(v,v+m) = 0\) and~\(dm <1\).  Define~\(\tilde{V} := B_z \) and~\(\tilde{G} := G[\tilde{V}]\), which satisfy~\(m \leq |\tilde{V}| \leq 2m\) and~\(\diam^*(\tilde{G})  \geq  \frac{1}{2m}  \geq  \frac{1}{2}d\), see \cref{figTreeCutPresDiamCase3a}. Similarly to Case~2b, we can find the desired cut~\((B,W)\) with~\(B \subseteq \tilde{V}\) by applying \cref{thmTreeDiam} to~\(\tilde{G}\).

\begin{figure}
  \begin{subfigure}{0.35\textwidth}
  \begin{center}
    \includegraphics{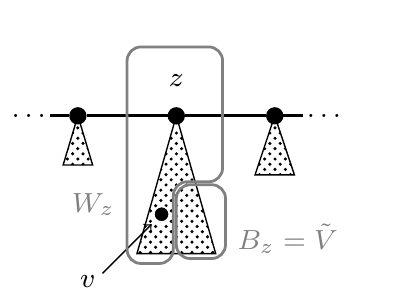}
  \end{center}
  \caption{Case~3a, where~\(z = v+m\).}
  \label{figTreeCutPresDiamCase3a}
  \end{subfigure}
  \begin{subfigure}{0.5\textwidth}
  \begin{center}
    \includegraphics{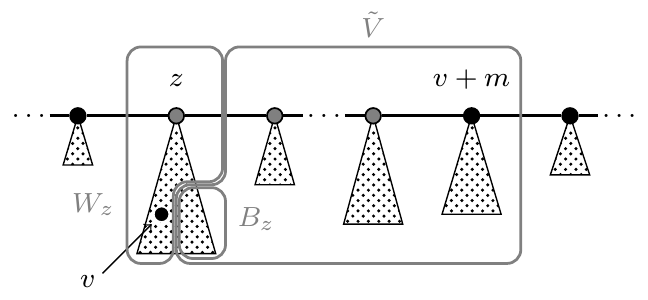}
  \end{center}
  \caption{Case~3b, where~\(z \neq v+m\).}
  \label{figTreeCutPresDiamCase3b}
  \end{subfigure}
  \caption{Proof of \cref{lemmaTreeCutPresDiam}, construction of~\(\tilde{V}\) in Case~3, where~\(v \notin V_P\) and \(v+m \in V_P\). Note that, in both cases,~\(v\) can also lie in~\(B_z\).}
  \label{figTreeCutPresDiamCase3}
\end{figure}

\textbf{Case~3b:} \(z \neq v+m\).\\
Define 
\[\tilde{V} := (M \setminus (T_z' \cup \{z\})) \cup B_z \cup \{v+m\},\]
see \cref{figTreeCutPresDiamCase3b}. This definition of~\(\tilde{V}\) is slightly different than in Case~2b, as here~\(z \in M\) but~\(v+m\) is in~\(\tilde{V}\) instead of~\(z\), which will decrease the bound on the number of cut edges. Now, \(\tilde{V}\) contains exactly~\(d_P(v, v+m)\) vertices of~\(P\). One can argue similar to Case~2b that~\(m \leq |\tilde{V}| \leq 2 m \) as well as that~\(\tilde{G}:=G[\tilde{V}]\) satisfies \(\diam^*(\tilde{G}) \geq \frac{1}{2} d\). Then, \cref{thmTreeDiam} guarantees that there is a cut~\( (\tilde{B}, \tilde{W})\) in~\(\tilde{G}\) with~\( e_{\tilde{G}} (\tilde{B}, \tilde{W}) \leq \frac{16}{d} \Delta(G)\) and~\(|\tilde{B}| = m\). Define~\(B= \tilde{B}\) and~\(W = V\setminus B\). As mentioned before,~\(d_P(v,v+m) = \Bfloor{dm}\) vertices of~\(P\) are in~\(\tilde{V}\). Therefore, at most~\( \Bfloor{dm}\) vertices of~\(P\) are in~\(B\) and, as in Case~2b, it follows that~\(\diam^*(G[W]) \geq d\). Since~\(z \not\in B_z\) and~\(z \not\in \tilde{V}\), it follows that~\(e_G(\tilde{V}, V \setminus \tilde{V}) \leq 2 \Delta(G)\). Hence, the desired bound on~\(e_G(B,W)\) is obtained. This completes the proof of \cref{lemmaTreeCutPresDiam}.

\subsection{Algorithm for Trees}
\label{subsecTreeAlgo}

To achieve the running time in \cref{thmTreeKSec}, it suffices to argue that a cut with the properties claimed by \cref{lemmaTreeCutPresDiam} can be computed in linear time. More precisely, consider a forest~\(G\) on~\(n\) vertices and fix an integer~\(m \in [n]\). The aim is to compute a cut~\((B,W)\) in~\(G\) in~\(\bigO(n)\) time with~\(|B| = m\) that satisfies~\(\diam^*(G[W]) \geq \diam^*(G)\) and~\(e_G(B,W) \leq \left(2 + \frac{16}{\diam^*(G)}\right) \Delta(G)\). The algorithm described here follows the construction presented in \cref{subsecTreeLemma}, which works for trees with maximum degree at least~\(3\). So assume for now that~\(G\) has these properties. 

First, the algorithm computes a longest path~\(P = (v_0, v_1, \ldots, v_{\ell})\) in~\(G\), which takes~\(\bigO(n)\) time\footnote{The following well-known procedure due to Dijkstra computes a longest path in a tree~\(T\). Root~\(T\) in an arbitrary vertex~\(r\) and compute a leaf~\(r'\) at maximum distance from~\(r\). Root~\(T\) in~\(r'\) and compute a leaf~\(r''\) at maximum distance from~\(r'\). Then, the unique \mbox{\(r'\text{,}r''\)-path} in~\(T\) is a longest path in~\(T\).}.  Recall that the ends~\(v_0\) and~\(v_{\ell}\) of~\(P\) were called~\(x_0\) and~\(y_0\) in \cref{subsecTreeLemma}. To compute a \mbox{\(P\)-labeling}, the algorithm rearranges the adjacency list of~\(v_h\) such that~\(v_{h-1}\) is the first entry in the adjacency list of~\(v_h\) for all~\(h \in [\ell]\). Then, the algorithm traverses the tree~\(G\) with a depth-first search starting in~\(v_{\ell} = y_0\) and labels each vertex when it turns black (i.e., when the processing of the vertex finishes, see the notation in~\cite{CLRS09}). It is easy to see that the computation of the \mbox{\(P\)-labeling} takes time proportional to~\(n\) and that it can be stored such that converting between vertices and labels and vice versa takes constant time.  In the implementation, we do not change the vertex names or identify vertices with their labels as in \cref{subsecTreeLemma}.  Furthermore, while computing the \mbox{\(P\)-labeling}, the algorithm computes~\(d_P(x_0,x)\) for each vertex~\(x \in V\), i.e., the number of vertices of~\(P\) that are already labeled right before~\(x\) is labeled. Now, \(d_P(x,y) = d_P(x_0,y) - d_P(x_0,x)\) for all~\(x,y \in V\) where~\(x \leq y\) and a vertex~\(v \in V\) with~\(d_P(v, v+m) =\Bfloor{\diam^*(G) \cdot m}\) and~\(v \in V_P\) or~\(v+m \in V_P\) can be computed in~\(\bigO(n)\) time. Using that the algorithms contained in \cref{thmTreeDiam} and \cref{lemmaApproxCutTree} take linear time, it follows that a cut with the desired properties can be constructed in~\(\bigO(n)\) time.  For more details see Chapter~6.2 in~\cite{thesisTina}.

To conclude, consider the case when~\(G\) is not a tree with~\(\Delta(G) \geq 3\).  Clearly, if~\(\Delta(G) \leq 2\), a cut in~\(G\) with the desired properties can be computed in~\(\bigO(n)\) time. If~\(G\) is not connected and~\(\Delta(G) \geq 3\), the algorithm adds edges to~\(G\) until a tree~\(T\) with~\(\diam^*(T) = \diam^*(G)\) and~\(\Delta(T) = \Delta(G)\) is obtained. More precisely, these additional edges need to join the ends of two longest paths in different components of~\(G\). As each connected component~\(G'\) of~\(G\) is a tree, a longest path in~\(G'\) can be computed in~\(\bigO(|V(G')|)\) time as mentioned above. Since~\(e_G(B,W) \leq e_T(B,W)\) holds for every cut~\((B,W)\) in~\(T\), the procedure described above can be applied to~\(T\) to obtain the desired cut in~\(G\).

\subsection{Remarks on Improving \cref{thmTreeKSec}}
\label{subsecTreeRemarks}

To improve the bound on the cut width in \cref{thmTreeKSec} as stated in \eqref{eqTreeKSecImpr}, recall that the proof of \cref{lemmaTreeCutPresDiam} uses \cref{thmTreeDiam} to estimate the width of the cut in~\(\tilde{G}\). The bound on the cut width in \cref{thmTreeDiam} can be improved to 
\begin{equation*}
  e_G(B,W) \ \leq \ \frac{1}{2} \left( \left( \log_2 \left( \frac{1}{\diam^*(G)} \right) \right)^2 + 7 \log_2 \left(\frac{1}{\diam^*(G)} \right) +6 \right) \Delta(G), 
\end{equation*}

see Section~1.4 in \cite{PaperGenTreeDiam} or Theorem~5.12 in \cite{thesisTina}. Using this, the bound on the width of the cut in \cref{lemmaTreeCutPresDiam} improves to
\begin{align*}
  e_G(B,W) \ \leq \; \; & 2 \Delta(G) + \ \frac{1}{2} \ \left( \left( \log_2 \left( \frac{2}{\diam^*(G)} \right) \right)^2 + 7 \log_2 \left( \frac{2}{\diam^*(G)} \right) +6  \right) \Delta(G) \nonumber \\
  = \; \; & \frac{1}{2} \ \left( \left( \log_2 \left( \frac{1}{\diam^*(G)} \right) \right)^2 + 9 \log_2 \left( \frac{1}{\diam^*(G)} \right) +18  \right) \Delta(G) 
\end{align*}
and the improvement on the bound on the width of the \ksec{k} in \cref{thmTreeKSec} as stated in \eqref{eqTreeKSecImpr} follows.

Last but not least for \ksec{k}s in trees, we mention that it does not seem to be obvious whether our algorithm or its analysis can be modified to obtain a linear time algorithm. More precisely, it is not clear how to reduce the dependency on \(k\) in the running time.

\section{Minimum $k$-Section in Tree-Like Graphs}
\label{secProofGen}

This section concerns the proof of \cref{thmGenKSec} about \ksec{k}s in tree-like graphs. We begin with presenting the definition and some facts about tree decompositions as well as some tools for tree-like graphs in \cref{subsecPrelTreeDec}. As in the proof of \cref{thmTreeKSec}, \cref{subsecGenProofThm} presents a lemma that immediately implies the existence part of \cref{thmGenKSec} and is proved in \cref{subsecGenProofLemma}. The main idea is similar to the proof in \cref{subsecTreeLemma}. Hence, \cref{subsecGenProofLemma} focuses on the aspects that are more involved than in the case of trees and refers to \cref{subsecTreeLemma} for steps that are analogous to the case of trees. All algorithmic aspects of \cref{thmGenKSec} are presented in \cref{subsecGenProofThm}. A detailed proof of \cref{thmGenKSec} can be found in Chapter~6.3 in~\cite{thesisTina}.

\subsection{Preliminaries for Tree Decompositions}
\label{subsecPrelTreeDec}

Let us start by recalling the definition of a tree decomposition.

\begin{defi}\label{defTreeDec}
  Let~\(G\) be a graph, \(T\) be a tree, and~\(\X= (X^i)_{i \in V(T)}\) with~\(X^i \subseteq V(G)\) for each~\({i \in V(T)}\). The pair~\((T,\X)\) is a \emph{tree decomposition} 
  of~\(G\) if the following three properties 
  hold.
  \begin{enumerate}[(T1), leftmargin= \IdentationTDConditions]
    \item For every~\(v \in V(G)\), there is an~\(i \in V(T)\) such that~\(v \in X^i\).
    \item For every~\(e \in E(G)\), there is an~\(i \in V(T)\) such that~\(e \subseteq X^i\). 
    \item For all~\(i, j \in V(T)\) and all~\(h \in V(T)\) on the (unique)~\(i\text{,}j\)-path in~\(T\), we have~\(X^i \cap X^j \subseteq X^h\). 
  \end{enumerate}  
  The \emph{width} of~\((T,\X)\) is defined as \(\max \{|X^i|-1 \colon \, i \in V(T)\}\). 
  The \emph{tree-width} of~\(G\), denoted by~\(\tw(G)\), is the smallest integer~\(t\) such that~\(G\) allows a tree decomposition of width~\(t\). 
\end{defi}

Consider a graph~\(G=(V,E)\) and a tree decomposition~\( (T,\X)\) with~\( \X= (X^i)_{i\in V(T)}\) of~\(G\). To distinguish the vertices of~\(G\) from the vertices of~\(T\) more easily, we refer to the vertices of~\(T\) as \emph{nodes}. Furthermore, for~\(i \in V(T)\), we refer to the set~\(X^i\) as the \emph{cluster} of~\(\X\) that corresponds to~\(i\), or simply the cluster of~\(i\) when the tree decomposition is clear from the context. It is easy to show that (T3) is equivalent to the following condition.
\begin{enumerate}[\emph{(T3')}, leftmargin= \IdentationTDExtraCondition]
 \item \emph{For every~\(v \in V\), the graph~\(T[I_v]\) is connected, where~\(I_v := \left\{i \in V(T) \colon \, v \in X^i \right\}\).}
\end{enumerate}

Consider a graph~\(G_0\) and a tree decomposition~\((T_0,\X_0)\) with~\(\X_0 = (X_0^i)_{i \in V(T_0)}\). In order to apply a procedure to a subgraph~\(G \subseteq G_0\), it is often necessary to construct a tree decomposition of~\(G\). For this purpose, the \emph{tree decomposition~\((T,\X)\) induced by~\(G\) in~\((T_0,\X_0)\)} is defined by~\(T=T_0\) and~\(X^i = X_0^i \cap V(G)\) for all~\(i \in V(T)\), where~\(\X = (X^i)_{i \in V(T)}\). Observe that~\((T,\X)\) is indeed a tree decomposition of~\(G\) as well as that the width and the size of~\((T,\X)\) are at most the width and the size of~\((T_0,\X_0)\), respectively. Usually, some clusters of an induced tree decomposition are empty, which can be avoided with the following concept. A tree decomposition~\( (T,\X)\) of a graph~\(G\) with~\( {\X = (X^i)_{i \in V(T)} } \) is called~\emph{nonredundant} if~\(X^i \not \subseteq X^j\) and~\(X^j \not \subseteq X^i\) for every edge~\( \{i,j\}\) in~\(T\). The next proposition says that any tree decomposition can be transformed into a nonredundant one without increasing its width in linear time. Recall that the size~\(\|(T,\X)\|\) and the relative weight of a heaviest path~\(r(T,\X)\) were introduced in \cref{subsecResults} before stating \cref{thmGenKSec}. 

\begin{prop}[Proposition~20 in \cite{PaperGenTreeDiam}]
  \label{propTDNonRed}
  For every tree decomposition~\((T,\X)\) of a graph~\(G\) with \(V(G) = [n]\) for some~\(n \in \mathbb{N}\), a nonredundant tree decomposition~\((T',\X')\) of~\(G\) of the same width as~\((T,\X)\) that satisfies~\(\|(T',\X')\| \leq \|(T,\X)\|\) and~\(r(T',\X') \geq r(T,\X)\)  can be computed in~\(\bigO(\|(T,\X)\|)\) time. 
\end{prop}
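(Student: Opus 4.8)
The plan is to describe a local reduction rule that contracts a redundant edge of the decomposition tree, to show that it preserves all the desired invariants, and to argue that an amortized linear-time implementation can apply all such rules efficiently. Concretely, suppose $\{i,j\}$ is an edge of $T$ with (say) $X^i \subseteq X^j$; then I would contract the node $i$ into $j$, i.e.\ delete $i$, reattach every former neighbor of $i$ (other than $j$) to $j$, and keep the clusters of all surviving nodes unchanged. Call the result $(T',\X')$. First I would check that $(T',\X')$ is still a tree decomposition of $G$: property~(T1) is immediate since no cluster was removed that was not contained in a surviving one; (T2) holds for the same reason, as every edge of $G$ that fit inside $X^i$ also fits inside $X^j$; and for (T3) one uses the characterization (T3'), noting that $I_v$ changes only by possibly deleting $i$, and $T[I_v\setminus\{i\}]$ stays connected because $j\in I_v$ whenever $i\in I_v$ (as $X^i\subseteq X^j$) and $j$ absorbed all of $i$'s neighbors. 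Since no cluster is enlarged, the width of $(T',\X')$ equals that of $(T,\X)$.

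Next I would verify the two quantitative invariants. For the size, $|V(T')| = |V(T)|-1$ and $\sum_{h\in V(T')}|X'^h| = \sum_{h\in V(T)}|X^h| - |X^i|$, so $\|(T',\X')\| \le \|(T,\X)\|$, indeed strictly smaller; this also bounds the number of reduction steps by $|V(T)| \le \|(T,\X)\|$, which is needed for the running time. For $r(T,\X)$, I would show that contracting $i$ into $j$ does not decrease the relative weight of a heaviest path. Given any path $P$ in $T$ realizing the maximum of $\bigl|\bigcup_{h\in V(P)} X^h\bigr|$, I would produce a path $P'$ in $T'$ with $\bigcup_{h\in V(P')} X'^h \supseteq \bigcup_{h\in V(P)} X^h$: if $P$ does not pass through $i$, take $P'=P$; if $P$ passes through $i$ with $i$ an internal vertex having $T$-neighbors $a,b$ on $P$, then in $T'$ the node $j$ is adjacent to whichever of $a,b$ was not already $j$, so reroute $P$ through $j$ in place of $i$, which only adds the vertices of $X^j \supseteq X^i$ to the union; the endpoint cases ($i$ a leaf of $P$, or $\{i,j\}\in E(P)$) are handled the same way, again replacing $i$ by $j$. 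Since $n$ is unchanged, $r(T',\X') \ge r(T,\X)$.

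Finally, for the algorithm I would maintain the decomposition tree with adjacency lists and repeatedly look for a redundant edge; the only subtlety is doing the containment tests and the reattachments within a total of $\bigO(\|(T,\X)\|)$ time rather than $\bigO(\|(T,\X)\|)$ per step. I expect this bookkeeping to be the main obstacle: a naive scan costs $\bigO(\|(T,\X)\|)$ per contraction and $\bigO(\|(T,\X)\|)$ contractions, giving a quadratic bound. The standard fix, which I would spell out, is to process $T$ bottom-up after rooting it arbitrarily: when visiting a node $i$ with parent $p$, test whether $X^i \subseteq X^p$ or $X^p \subseteq X^i$ using the fact that, since $V(G)=[n]$, one can afford one length-$n$ boolean scratch array and mark $X^p$ (resp.\ $X^i$) in time $\bigO(|X^p|+|X^i|)$, then unmark it; if the edge is redundant, contract as above, charging the reattachment of $i$'s children to $p$ to the children themselves (each child is reattached at most once along the path to the current root, and a more careful union-by-size or pointer-jumping argument, or simply processing in postorder so that $i$'s subtree is already reduced, keeps the total at $\bigO(\sum_i |X^i| + |V(T)|) = \bigO(\|(T,\X)\|)$). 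Since every surviving edge has been tested and found nonredundant, the output is nonredundant, and by the paragraphs above it has the same width, no larger size, and no smaller heaviest-path weight, completing the proof.
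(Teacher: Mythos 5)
The paper does not prove this proposition at all: it is imported verbatim as Proposition~20 of \cite{PaperGenTreeDiam}, so there is no in-paper argument to compare against. Your proposal is the standard (and surely the intended) argument: repeatedly contract a redundant edge \(\{i,j\}\) with \(X^i\subseteq X^j\) into \(j\). The structural part of your write-up is correct and complete: (T1), (T2), the (T3') argument via rerouting through \(j\), preservation of the width (note the maximum cannot drop either, since the deleted cluster is contained in a surviving one), the strict decrease of \(\|\cdot\|\), and the path-rerouting argument for \(r(T,\X)\) (your case analysis is sound because a path in a tree containing both \(i\) and \(j\) must use the edge \(\{i,j\}\), so the reroute never revisits \(j\)).

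Two points in the algorithmic part deserve to be nailed down rather than left as options. First, correctness of ``test each edge once'': after contracting \(i\) into \(p\), the new edges \(\{c,p\}\) for former children \(c\) of \(i\) need not be retested, because (T3) gives \(X^c\cap X^p\subseteq X^i\), so \(X^c\subseteq X^p\) would force \(X^c\subseteq X^i\) and \(X^p\subseteq X^c\) would force \(X^p= X^i\subseteq X^c\), both contradicting the nonredundancy of \(\{c,i\}\) already established; without this observation your claim that the output is nonredundant is incomplete. Second, your per-edge containment test costing \(\bigO(|X^i|+|X^p|)\) sums to \(\sum_h \deg_T(h)\,|X^h|\), which can be quadratic in \(\|(T,\X)\|\) (e.g.\ a star whose center has a large cluster); the fix is to mark \(X^p\) once and test all children of \(p\) against it by counting marked elements, which brings the total to \(\bigO(\sum_h|X^h|)\). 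Similarly, physically reattaching children can be quadratic on a caterpillar, so the representative-pointer/union trick you mention is not optional but necessary. With those two repairs spelled out, the proof is complete.
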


When working with a tree~\(\tilde{T}\), for example in the proof of \cref{lemmaApproxCutTree} or \cref{lemmaTreeCutPresDiam}, the following cuts were applied. For some vertex~\(v \in V(\tilde{T})\) we removed all edges incident to~\(v\) and combined the vertex sets of the resulting connected components to obtain a cut in~\(\tilde{T}\). Each time such a construction was used, at most~\(\deg(v) \leq \Delta(\tilde{T})\) edges were cut. This can be generalized by considering clusters of a tree decomposition, as done in the next lemma. It uses the following notation: Consider a graph~\(G\) and a tree decomposition~\((T,\X)\) of~\(G\). For each node~\(i\) in~\(T\), we denote by~\(E_{G}(i)\) the set of edges~\(e \in E(G)\) such that~\(e \cap X^i \neq \emptyset\), where~\(X^i\) is the cluster of~\(i\). Note that~\(|E_G(i)| \leq t \Delta(G)\) for every~\(i \in V(T)\), where~\(t-1\) denotes the width of~\( (T,\X)\).  We say that two subgraphs~\(H_1\subseteq G\) and~\(H_2 \subseteq G\) are \emph{disjoint parts} of~\(G\) if~\(V(H_1) \cap V(H_2)= \emptyset\) and there is no edge~\(e=\{x,y\}\) in~\(G\) with~\(x \in V(H_1)\) and~\(y \in V(H_2)\). Note that, if~\(G\) is not connected, then two distinct connected components of~\(G\) are disjoint parts of~\(G\), but the subgraph~\(H_i\) for~\(i \in \{1,2\}\) in the definition of disjoint parts does not have to be connected. 

\begin{lemma}[see Fact~10.13 and Fact~10.14 in \cite{KleinbergTardos} or Corollary~1.8 in \cite{ReedBook}]
  \label{lemmaRemoveClusterEdges}
  Let~\(G=(V,E)\) be an arbitrary graph and let~\( (T, \X)\) be a tree decomposition of~\(G\) with~\({ \X = (X^i)_{i \in V(T)} } \). Fix some node~\(i \in V(T)\), let~\(\ell := \deg_T (i) \) and denote by~\( i_1, i_2, \ldots, i_{\ell}\) the neighbors of~\(i\) in~\(T\). Furthermore,  for~\( h \in [\ell]\), let~\(V_{h}^T\) be the node set of the component of~\(T-i\) that contains~\(i_h\). Removing the edges in~\(E_G(i)\) from~\(G\) decomposes~\(G\) into~\(\ell + |X^i| \) disjoint parts, which are~\( (\{v\}, \emptyset) \) for every~\(v \in X^i\) and~\(G[V_h] \) for every~\( h \in [\ell]\), where~\( V_h \ := \bigcup_{j \in V_h ^T} X^j \setminus X^i\).
\end{lemma}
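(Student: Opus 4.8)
The plan is to verify the two assertions of the lemma separately: first, that the sets
\(\{v\}\) for \(v \in X^i\) together with the sets \(V_h = \bigcup_{j \in V_h^T} X^j \setminus X^i\)
for \(h \in [\ell]\) partition \(V\); and second, that after deleting \(E_G(i)\) there are no edges of \(G\) running between two of these parts, so that each of them is a union of connected components of \(G - E_G(i)\) and hence the parts are pairwise disjoint parts of \(G\) in the sense defined just before the lemma.

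For the partition claim, I would argue as follows. Every node \(j \in V(T)\) with \(j \neq i\) lies in exactly one component of \(T - i\), so the index sets \(V_1^T, \ldots, V_\ell^T\) partition \(V(T) \setminus \{i\}\). Now take any \(x \in V\). By (T1) there is a node containing \(x\); if \(x \in X^i\), then \(x\) is accounted for by the singleton part \(\{x\}\), and \(x\) lies in no \(V_h\) by the explicit removal of \(X^i\) in the definition of \(V_h\). If \(x \notin X^i\), pick any node \(j \ne i\) with \(x \in X^j\); it lies in \(V_h^T\) for a unique \(h\), so \(x \in V_h\). To see \(x\) lies in only one \(V_h\): the set \(I_x = \{j : x \in X^j\}\) induces a connected subtree of \(T\) by (T3'), and since \(i \notin I_x\) this subtree is entirely contained in \(T - i\), hence in a single component of \(T - i\); thus all nodes of \(I_x\) lie in the same \(V_h^T\), giving uniqueness. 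This shows the \(\ell + |X^i|\) listed sets form a partition of \(V\).

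For the no-crossing-edges claim, let \(e = \{x, y\} \in E(G) \setminus E_G(i)\), so \(e \cap X^i = \emptyset\). By (T2) there is a node \(j\) with \(\{x,y\} \subseteq X^j\), and \(j \ne i\) since \(x \notin X^i\); say \(j \in V_h^T\). Then \(x, y \in V_h\) (both are in \(X^j\) and neither is in \(X^i\)), so \(e\) lies inside a single part \(V_h\). Edges with an endpoint in some singleton \(\{v\}\), \(v \in X^i\), are all incident to \(X^i\) and hence belong to \(E_G(i)\), so no surviving edge touches a singleton part. Therefore every edge of \(G - E_G(i)\) has both endpoints in the same listed set, which establishes both that the parts are pairwise disjoint parts of \(G\) and that \(G[V_h]\) is precisely the union of the components of \(G - E_G(i)\) meeting \(V_h^T\). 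Finally, I would note \(|E_G(i)| \le t\Delta(G)\) as already observed in the text, and that the number of parts is \(\ell + |X^i|\) by construction, completing the proof; a reference to Fact~10.13/10.14 in \cite{KleinbergTardos} or Corollary~1.8 in \cite{ReedBook} can stand in for the details.

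The only mild subtlety — the ``main obstacle'' such as it is — is making sure the singleton parts are genuinely disjoint from the \(V_h\) and from each other and that no edge is double-counted: this is handled cleanly by the deliberate \(\setminus X^i\) in the definition of \(V_h\), so each \(v \in X^i\) is stripped out of every \(V_h\) and appears only as its own part. Everything else is a routine unwinding of (T1), (T2), and (T3'), so I do not expect any real difficulty.
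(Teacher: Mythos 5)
Your proof is correct and complete: the partition of $V$ into the singletons $\{v\}$, $v\in X^i$, and the sets $V_h$ follows from (T1) and (T3') exactly as you argue, and the absence of surviving edges between distinct parts follows from (T2) together with the definition of $E_G(i)$. The paper itself gives no proof of this lemma---it simply cites Fact~10.13/10.14 in Kleinberg--Tardos and Corollary~1.8 in Reed---and your argument is the standard one found in those references, so there is nothing to compare beyond noting that you have supplied the routine details the paper omits.
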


This lemma says that if we remove the edges in~\(E_G(i)\) for some~\(i \in V(T)\), then the graph~\(G\) splits into several disjoint parts. Hence we can combine these disjoint parts in an arbitrary way to obtain a cut in~\(G\) of width at most~\(t \Delta(G) \). This idea suffices to generalize the existence part of \cref{lemmaApproxCutTree} to arbitrary graphs with a given tree decomposition. The algorithmic part is a direct consequence of Lemma~4 in \cite{PaperGenTreeDiam}. 

\begin{lemma}
  \label{lemmaApproxCutGen} Let~\(G\) be an arbitrary graph on~\(n\) vertices and let~\((T,\X)\) be a tree decomposition of~\(G\) of width at most~\(t-1\). For every integer~\(m \in [2n]\), there is an approximate \mbox{\(m\)-cut}~\((B,W)\) in~\(G\) with~\(e_G(B,W) \leq t \Delta(G)\). If the tree decomposition~\((T,\X)\) is provided as input and~\(V(G)=[n]\), then a cut satisfying these requirements can be computed in~\(\bigO( \|(T,\X)\|)\) time. 
\end{lemma}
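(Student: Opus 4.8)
The plan is to follow the proof of \cref{lemmaApproxCutTree} almost verbatim, making one substitution: wherever that proof deletes all edges incident to a single vertex, here I would delete the edge set $E_G(i)$ of a single node $i \in V(T)$, which by \cref{lemmaRemoveClusterEdges} breaks $G$ into disjoint parts. First I would dispose of the range $m \geq n$: there $(B,W) := (V, \emptyset)$ is already an approximate \mbox{\(m\)-cut} of width~$0$, since $\tfrac12 m \leq n = |B| \leq m$ follows from $m \in [2n]$. So for the rest I would assume $m < n$.

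Next I would root $T$ at an arbitrary node $r$ and, for each node $i$, let $f(i)$ be the number of vertices of $G$ that occur in some cluster of the subtree of $T$ rooted at $i$; thus $f(r) = n > m$. Descending from $r$ and always moving to a child $c$ of the current node with $f(c) > m$ while such a child exists, I would stop at a node $i$ with $f(i) > m$ but $f(c) \leq m$ for every child $c$ of $i$. Now apply \cref{lemmaRemoveClusterEdges} to $i$: deleting $E_G(i)$ splits off, among other disjoint parts, the singletons $(\{v\},\emptyset)$ for $v \in X^i$ and one part $G[V_c]$ for each child $c$ of $i$, where $V_c$ is the set of vertices appearing in a cluster of the subtree rooted at $c$ but not in $X^i$; the one remaining part (on the parent side of $i$, if $i \neq r$) I would simply put entirely into $W$. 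The parts in this first group jointly cover exactly $f(i) > m$ vertices, and each of them has at most $\max\{1,\, \max_c |V_c|\} \leq \max\{1,\, \max_c f(c)\} \leq m$ vertices, since $|V_c| \leq f(c)$ and $m \geq 1$.

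With these ``small'' parts in hand, I would assemble $B$ greedily exactly as in \cref{lemmaApproxCutTree}: if some part already has at least $\tfrac12 m$ vertices, let $B$ be its vertex set; otherwise every part has fewer than $\tfrac12 m$ vertices, and I add parts to $B$ one at a time until $|B|$ first reaches $\tfrac12 m$, which must happen because the parts jointly cover more than $m$ vertices, and which cannot overshoot $m$ because the last part added has fewer than $\tfrac12 m$ vertices. Either way $\tfrac12 m \leq |B| \leq m$. Setting $W := V \setminus B$, both $B$ and $W$ are unions of the disjoint parts obtained by deleting $E_G(i)$, so every edge cut by $(B,W)$ lies in $E_G(i)$, and hence $e_G(B,W) \leq |E_G(i)| \leq t\Delta(G)$, as required.

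For the running time, the only step that is not obviously linear is computing all the values $f(i)$; I would obtain these from Lemma~4 in \cite{PaperGenTreeDiam} (or, directly, compute for each vertex $v$ the node of $I_v$ closest to $r$ and aggregate bottom-up along $T$), after which the descent to~$i$, reading off the relevant parts from \cref{lemmaRemoveClusterEdges}, and the greedy assembly of $B$ all run in $\bigO(\|(T,\X)\|)$ time. The one genuinely delicate point — and the reason for the $f$-descent and for insisting that $X^i$ be broken into singletons rather than left intact when some cluster is larger than $m$ — is to guarantee that none of the pieces created by deleting $E_G(i)$ is too large to be used in the greedy combination; once that is ensured, everything else is a routine adaptation of the tree argument.
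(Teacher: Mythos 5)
Your proposal is correct and is exactly the argument the paper intends: the paper itself only sketches this lemma, saying that removing $E_G(i)$ for a single node $i$ (via \cref{lemmaRemoveClusterEdges}) and recombining the resulting disjoint parts "suffices to generalize the existence part of \cref{lemmaApproxCutTree}", with the algorithmic part delegated to Lemma~4 of \cite{PaperGenTreeDiam}. Your $f$-descent to a node all of whose children's subtrees carry at most $m$ vertices, followed by the greedy recombination, is the faithful tree-decomposition analogue of the descendant-set argument in the proof of \cref{lemmaApproxCutTree}, so nothing further is needed.
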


Furthermore, \cref{thmTreeDiam} (or, more precisely, its improved version mentioned in \cref{subsecTreeRemarks}) can be generalized to arbitrary graphs. Recall that, instead of working with the relative diameter, we now use the relative weight of a heaviest path in a given tree decomposition, which means the following.  Consider a tree decomposition~\( (T,\X)\) of some graph~\(G\) with~\(\X = (X^i)_{i \in V(T)}\) and a path~\(P \subseteq T\). The \emph{weight} of~\(P\) with respect to~\(\X\) is~\(w_{\X}(P) := \left| \bigcup_{i \in V(P)} X^i \right|\) and the \emph{relative weight} of~\(P\) with respect to~\(\X\) is~\(w^*_{\X} (P) = \frac{1}{n} w_{\X}(P)\), where~\(n\) denotes the number of vertices of~\(G\). Furthermore, in \cref{subsecResults},~\(r(T,\X)\) was defined to be the \emph{relative weight of a heaviest path} in~\(T\), i.e., \(r(T,\X) = w^*_{\X}(P^*)\) where~\(P^*\subseteq T\) is a path with~\(w_{\X} (P^*) \geq w_{\X}(P)\) for all paths~\(P \subseteq T\). 

\begin{thm}[similar to Theorem~3 in~\cite{PaperGenTreeDiam}]
  \label{thmGenTreeDiam}
  Let~\(G\) be an arbitrary graph on~\(n\) vertices and let~\((T,\X)\) be a tree decomposition of~\(G\) of width at most~\(t-1\). For every integer~\(m \in [n]\), there is a cut~\((B,W)\) in~\(G\) with~\(|B| =m\) that satisfies
  
  \[ e_G(B,W) \ \leq \ \frac{t}{2}  \left( \left( \log_2  \frac{1}{r(T,\X)} \right)^2 \ + \  9 \log_2  \frac{1}{r(T,\X)} \ + \ 8 \right)\ \Delta(G).\] 
  If the tree decomposition~\( (T,\X)\) is provided as input and~\(V(G)=[n]\), a cut~\((B,W)\) with these properties can be computed in~\( \bigO \left(  \| (T, \X) \| \right) \) time. 
\end{thm}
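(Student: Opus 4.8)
The plan is to prove \cref{thmGenTreeDiam} by carrying over the proof underlying Theorem~3 of \cite{PaperGenTreeDiam} — the tree-decomposition counterpart of \cref{thmTreeDiam}, in the sharpened form whose tree analogue is discussed in \cref{subsecTreeRemarks} — with \cref{lemmaRemoveClusterEdges} taking over the role played in the tree argument by the operation ``delete all edges incident to a chosen vertex''. First I would preprocess with \cref{propTDNonRed}, so $(T,\X)$ may be assumed nonredundant; this leaves the width, $r(T,\X)$, and (up to the implicit constant) the size unchanged, and in particular removes empty clusters. Then I would root $T$ and assign to each $v\in V(G)$ a unique \emph{owner} $\pi(v)$, namely the node of $I_v:=\{i\in V(T):v\in X^i\}$ closest to the root; this is well defined since $T[I_v]$ is connected by (T3'). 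With $Y^i:=\pi^{-1}(i)$ the family $(Y^i)_{i\in V(T)}$ partitions $V(G)$, $|Y^i|\le|X^i|\le t$, and for a subtree $T'\subseteq T$ the sum $\sum_{i\in V(T')}|Y^i|$ counts each vertex of $G$ ``living below $T'$'' exactly once. This owner partition is the bookkeeping device that replaces direct counting of vertices, which is needed because a vertex of $G$ may belong to many clusters.

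Next I would fix the dictionary between the two settings: a longest path of the input tree becomes a \emph{heaviest path} $P^*\subseteq T$, with $\bigl|\bigcup_{i\in V(P^*)}X^i\bigr|=r(T,\X)\,n$; the parameter $\diam^*$ is replaced by $r=r(T,\X)$, since walking along $P^*$ one meets at least $rn$ distinct vertices of $G$, exactly as one met $\diam^*\!\cdot n$ vertices along a longest path. A $P$-labeling becomes a labeling of $V(G)$ in which, for each $i\in V(P^*)$, the vertices owned by the branches hanging off $i$ receive consecutive labels, in the order of $P^*$. Both elementary cuts used in the tree proof turn into applications of \cref{lemmaRemoveClusterEdges}: ``cut off some subtrees hanging off a vertex'' becomes ``delete $E_G(i)$ for a node $i$ and reassemble the resulting disjoint parts'', cutting at most $t\Delta(G)$ edges instead of at most $\Delta$; and recursing into a subtree becomes recursing into a component $T'$ of $T-i$ equipped with the induced tree decomposition $(T',\X')$, of width at most $t-1$. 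The divide-and-conquer then runs as in \cite{PaperGenTreeDiam}: locate along $P^*$ the first node at which the running vertex count reaches or overshoots $m$, cut off the corresponding prefix cheaply, and recurse with the residual deficit inside the single branch that still has to be adjusted, so that $|B|=m$ is maintained exactly. The structural input carried over verbatim is that, because $P^*$ is \emph{heaviest}, every path inside such a branch has weight at most $rn$; this is what bounds the recursion depth by $O(\log_2(1/r))$ and limits how much the ratio $1/r$ can grow per level.

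Summing the costs: there are $O(\log_2(1/r))$ levels, and at level $j$ the current cut is refined by $O(j)$ applications of \cref{lemmaRemoveClusterEdges}, for a total of at most $\tfrac t2\bigl((\log_2\tfrac1r)^2+9\log_2\tfrac1r+8\bigr)\Delta(G)$. Relative to the pure-tree bound the only changes are the uniform substitution $\Delta\to t\Delta(G)$ and an extra additive term of order $t\Delta(G)\log_2(2/r)$, coming from the fact that detaching the current branch from the rest now costs a whole cluster's worth of edges — up to $t\Delta(G)$ — at every level, rather than the two or so path edges in the tree picture; this is precisely what turns the additive constants $7,6$ into $9,8$. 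For the running time, assuming $V(G)=[n]$ and $(T,\X)$ as input: $\pi$ and the counts $|Y^i|$ are obtained in one pass over $T$ using (T3'); a heaviest path is found by the usual two-pass dynamic program on $T$ (fix a node, compute for every node the heaviest path to it using the owner counts so that shared vertices are not recounted, then repeat from the maximiser); the labeling, each use of \cref{lemmaRemoveClusterEdges}, and \cref{lemmaApproxCutGen} are all linear in the size of the current tree decomposition; and, exactly as in the linear-time algorithm behind \cref{thmTreeDiam}, the recursive calls receive disjoint pieces, so the work over all levels telescopes to $\bigO(\|(T,\X)\|)$.

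The step I expect to be the main obstacle is making the passage ``$P^*$ heaviest $\Rightarrow$ controlled ratio growth in each branch'' fully rigorous in the tree-decomposition language: one must compare the \emph{weight} of a path inside a branch $T'$ (measured through $\bigcup X^i$) with the number of $G$-vertices \emph{owned} below $T'$ (measured through $\sum|Y^i|$), two quantities that differ by the clusters sitting on the boundary of $T'$. Keeping this boundary slack under control, and checking that it does not accumulate across the $O(\log_2(1/r))$ recursion levels, is exactly where the extra care — and the slightly weaker constants compared with the tree case — is spent.
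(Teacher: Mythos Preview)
The paper does not prove \cref{thmGenTreeDiam}; it states the result in \cref{subsecPrelTreeDec} as a preliminary tool imported from~\cite{PaperGenTreeDiam} (``similar to Theorem~3 in~\cite{PaperGenTreeDiam}'') and uses it as a black box in the proof of \cref{lemmaCutPresR}. So there is no proof in this paper to compare your proposal against.

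That said, your sketch is a faithful outline of how the argument in~\cite{PaperGenTreeDiam} actually goes: nonredundancy via \cref{propTDNonRed}, a heaviest path in place of a longest path, a $P$-labeling adapted to clusters, cluster-removal cuts via \cref{lemmaRemoveClusterEdges} replacing single-vertex cuts (each costing $t\Delta(G)$ instead of $\Delta$), and the same divide-and-conquer along the heavy path with depth $O(\log_2(1/r))$. Your identification of the delicate point --- controlling how $r$ degrades when passing to a branch, because weights measured via $\bigcup X^i$ and owner-counts via $\sum|Y^i|$ differ by boundary clusters --- is exactly right, and is indeed where the shift in constants from $7,6$ to $9,8$ originates. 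The proposal is correct in spirit; a full write-up would need to make the recursion invariant precise, but the plan matches the cited source.
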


\subsection{Proof for \cref{thmGenKSec}}
\label{subsecGenProofThm}

The main idea of the proof of \cref{thmGenKSec} is similar to the proof of \cref{thmTreeKSec}, i.e., the pieces~\(V_{\ell}\) of a \ksec{k}~\((V_1, \ldots, V_{k})\) are cut off successively from the graph~\(G\) while ensuring that the relative weight of a heaviest path in the tree decomposition induced by the remaining part is at least as large as the relative weight of a heaviest path in the original tree decomposition.  The next lemma states this formally. 

\begin{lemma}
  \label{lemmaCutPresR}
  Let~\(G\) be an arbitrary graph on~\(n\) vertices and let~\( (T,\X)\) be a tree decomposition of~\(G\) of width~\(t-1\). For every integer~\(m \in [n-1]\), there is a cut \( (B,W)\) in~\(G\) with~\(|B| = m\), 
  \[ e_{G}(B,W) \ \leq \ \frac{t}{2}  \left( \left( \log_2 \left( \frac{1}{r(T,\X)} \right) \right)^2 + 11 \log_2 \left( \frac{1}{r(T,\X)} \right) + 24 \right) \Delta(G),\]
  and such that~\( r (T',\X')  \geq r(T,\X) \) holds for the tree decomposition~\( (T',\X')\) induced by~\(G[W]\) in~\( (T,\X)\). 
\end{lemma}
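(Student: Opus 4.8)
The plan is to mirror the proof of \cref{lemmaTreeCutPresDiam} but replace the longest path in the tree by a heaviest path in the tree decomposition, and replace ``cutting at a path-vertex'' by ``removing the edges~\(E_G(i)\) at a node~\(i\)'' as licensed by \cref{lemmaRemoveClusterEdges}. Fix a heaviest path~\(P^* \subseteq T\), so that \(\left|\bigcup_{i \in V(P^*)} X^i\right| = r(T,\X)\, n =: rn\). Write~\(P^*=(i_0, i_1, \dots, i_\ell)\) and let~\(Y^0, Y^1, \dots, Y^\ell\) be the ``slices'' of~\(V(G)\) obtained by assigning each vertex~\(v\) to the smallest index~\(j\) with~\(v \in X^{i_j}\) if~\(v\) appears on the path (and leaving the remaining vertices, those in no cluster of~\(P^*\), aside for the moment); by (T3') these slices are consecutive intervals along~\(P^*\). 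This plays the role of the trees~\(T_v'\) hanging off the longest path. One then builds a cyclic labeling~\(1, \dots, n\) of~\(V(G)\) analogous to the \(P\)-labeling: vertices in the ``branches'' off a node~\(i_j\) come first, then~\(X^{i_j}\)-vertices assigned to~\(i_j\), and slices are ordered along~\(P^*\) from~\(i_0\) to~\(i_\ell\). The role of~\(\diam^*\) is played by~\(r(T,\X)\), and \(d_P\) becomes the number of path-cluster-vertices lying in a cyclic label window.

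\textbf{Key steps.} First I would set up the labeling and the analogue of~\eqref{proofLemmaCutPresDiamContin}: the count of path-vertices in a window of length~\(m\) changes by at most~\(1\) when the window shifts by one, so by the same averaging argument over the~\(n\) shifts~\(x_\ell = x_0 + \ell m\) there is a window~\(M\) of~\(m\) consecutive labels containing exactly~\(\lfloor rm\rfloor\) vertices that lie in \(\bigcup_{j}X^{i_j}\). Next, as in \cref{subsecTreeLemma}, I would align the window endpoints so that they fall ``at'' nodes of~\(P^*\) in the sense that the first and last slices met by~\(M\) are handled cleanly; the node~\(z\) playing the role of the partially-cut branch is the node~\(i_j\) whose slice straddles an endpoint of~\(M\). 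The three cases (both endpoints at nodes; one endpoint inside a branch; the symmetric case) are exactly Cases~1, 2, 3 of \cref{subsecTreeLemma}. In the ``clean'' case~\(B := M\) already works: by \cref{lemmaRemoveClusterEdges} applied at the two boundary nodes, \(e_G(B,W) \le 2 t\Delta(G)\), and the white side keeps at least~\(rn - \lfloor rm\rfloor \ge r(n-m)\) path-cluster-vertices, so \(r(T',\X') \ge r(T,\X)\). In the straddling case I would use \cref{lemmaApproxCutGen} to split the troublesome branch~\(G[V_z]\) (rather than a subtree~\(T_z\)) into~\(B_z, W_z\) with~\(e_G(B_z,W_z)\le t\Delta(G)\), form~\(\tilde V\) of size between~\(m\) and~\(2m\) that still contains all~\(\lfloor rm\rfloor\) path-cluster-vertices of~\(M\); the induced tree decomposition~\((\tilde T,\tilde\X)\) of~\(\tilde G := G[\tilde V]\) then has a path of relative weight at least~\(\frac{\lfloor rm\rfloor + (\text{const})}{2m} \ge \frac{r}{2}\), i.e. \(r(\tilde T, \tilde\X) \ge \tfrac12 r(T,\X)\). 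Applying \cref{thmGenTreeDiam} to~\((\tilde T,\tilde\X)\) with parameter~\(m\) cuts off exactly~\(m\) vertices with width at most \(\frac{t}{2}\big((\log_2\frac{2}{r})^2 + 9\log_2\frac{2}{r} + 8\big)\Delta(G)\); adding the at-most-\(2t\Delta(G)\) edges from \(e_G(\tilde V, V\setminus\tilde V)\) and expanding~\(\log_2\frac{2}{r} = \log_2\frac1r + 1\) gives~\(\frac{t}{2}\big((\log_2\frac1r)^2 + 11\log_2\frac1r + 24\big)\Delta(G)\), the claimed bound. The white-side estimate is as before: all path-cluster-vertices of~\(P^*\) outside~\(M\) survive, so \(r(T',\X')\,|W| \ge rn - rm\), hence \(r(T',\X') \ge r(T,\X)\).

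\textbf{Main obstacle.} The delicate point is the bookkeeping for the cut width in the straddling case, i.e.\ making \cref{lemmaRemoveClusterEdges} play the exact role that ``the vertex before/after~\(v\) on~\(P\)'' played in \cref{subsecTreeLemma}: one must check that when~\(M\) is extended to~\(\tilde V\) by swapping a slice for~\(B_z\), the only edges cut between~\(\tilde V\) and its complement, apart from the~\(\le t\Delta(G)\) internal edges of the branch given by \cref{lemmaApproxCutGen}, are edges meeting~\(X^{i_j}\) or~\(X^{i_{j'}}\) at the two boundary nodes, and that these contribute at most~\(2t\Delta(G)\) in total (not~\(4t\Delta(G)\)) because the edge ``along~\(P^*\)'' at one side is not cut. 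This requires carefully re-deriving the analogues of the Case-2b and Case-3b edge-counting paragraphs with~\(E_G(i)\) in place of~\(\deg(v)\), and is the step I expect to occupy most of the real work; everything else transfers from \cref{subsecTreeLemma} essentially verbatim, which is why the paper says it will ``focus on the aspects that are more involved.''
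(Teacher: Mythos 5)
Your proposal follows essentially the same route as the paper: fix a heaviest path, build the analogous \(P\)-labeling with the branch sets \(S_i\) preceding the path-cluster sets \(R_i\), use the averaging argument to find a window with exactly \(\Bfloor{rm}\) path-cluster vertices aligned at a node, split the straddling branch with \cref{lemmaApproxCutGen} to form \(\tilde{V}\) with \(m \le |\tilde{V}| \le 2m\), and apply \cref{thmGenTreeDiam} to a tree decomposition of the graph on \(\tilde{V}\) with relative heaviest-path weight at least \(\tfrac{1}{2}r(T,\X)\). The only divergence is at the point you flag as the main obstacle: the paper does not take the induced decomposition of \(G[\tilde V]\) directly but first deletes the edges \(E_G(j)\) to disconnect \(B_j\) from the rest and then glues the two induced decompositions along an extra edge, which is precisely how it secures the ``\(+1\)'' in the path weight \(\Bfloor{rm}+1 \ge rm\) (your unexplained ``\(+\mathrm{const}\)''), and it charges \(3t\Delta(G)\) rather than \(2t\Delta(G)\) for the boundary --- the third \(t\Delta(G)\) accounting for the internal branch cut and the deleted edges of \(E_G(j)\) --- which is exactly what yields the constant \(24\).
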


The existence part of \cref{thmGenKSec} follows immediately from the previous lemma. 

\subsection{Proof of \cref{lemmaCutPresR}}
\label{subsecGenProofLemma}

For the remaining section, fix an arbitrary graph~\(G=(V,E)\) on~\(n\) vertices and let~\((T,\X)\) with~\(T=(V_T, E_T)\) and~\(\X =(X^i)_{i \in V_T}\) be a tree decomposition of~\(G\). Due to \cref{propTDNonRed} we may assume that~\((T,\X)\) is nonredundant. Define~\(r := r(T,\X)\) and denote by~\(t-1\) the width of~\((T,\X)\). Furthermore, fix a path~\(P \subseteq T\) of relative weight~\(r\) with respect to~\((T,\X)\) and let~\(P=(V_P, E_P)\). 

\subsubsection{Notation and Vertex Labeling}
\label{subsubsecGenNotation}

First, we settle some notation and introduce a labeling similar to the labeling used in \cref{subsecTreeLemma}. Fix one end~\(i_0\) of~\(P\). Consider two neighboring nodes~\(i\) and~\(j\) on~\(P\). We say that~\(i\) is the \emph{node before~\(j\) on~\(P\)}, if~\(i\) is passed before~\(j\) when traversing~\(P\) from~\(i_0\) to its other end, say~\(j_0\), and otherwise we say that~\(i\) is the \emph{node after~\(j\) on~\(P\)}. For technical reasons, this notion is extended to the nodes~\(i_0\) and~\(j_0\) by saying that~\(i_0\) is the node after~\(j_0\) on~\(P\) and that~\(j_0\) is the node before~\(i_0\) on~\(P\). 

Define~\(R:= \bigcup_{i \in V_P} X^i\) and~\(S:= V \setminus R\). For each~\(i \in V_P\),  denote by~\(T_i\) the component of~\(T- E_P\) that contains~\(i\). Moreover, for each~\(x \in R\), the unique node~\(i \in V_P\) that is closest to~\(i_0\) among all nodes~\(j \in V_P\) with~\(x \in X^j\) is called the \emph{path-node} of~\(x\). Define
\[ R_i := \{x \in X^i\colon\, \text{\(i\) is the path-node of \(x\)}\} \quad \quad \text{and}\quad \quad S_i := \bigcup_{j \in V(T_i)} X^j \setminus R\]
for all~\(i \in V_P\). For~\(x\in S\), the node~\(i \in V_P\) is called the \emph{path-node} of~\(x\) if and only if~\(x \in S_i\). The next proposition lists some properties of these sets and implies that every vertex~\(x \in V\) has a unique path-node~\(i \in V_P\).

\begin{prop}
  \label{propPropRiSi}
  \begin{enumerate}[a)]
    \item[]
    \item\label{propPropRiSiPartition} The sets~\(R_i\) with~\(i \in V_P\) form a partition of~\(R\) and the sets~\(S_i\) with~\(i \in V_P\) form a partition of~\(S\).
    \item\label{propPropRiSiNonEmpty} For each~\(i \in V_P\), the set~\(R_i\) is not empty. 
  \end{enumerate}
\end{prop}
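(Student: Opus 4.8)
The plan is to prove Proposition~\ref{propPropRiSi} directly from the definitions of~\(R_i\) and~\(S_i\), using only the fact that~\((T,\X)\) is a tree decomposition (in particular the connectivity property~(T3')). The proof splits naturally along the structure of the statement: first establish that the~\(R_i\) partition~\(R\), then that the~\(S_i\) partition~\(S\), and finally that each~\(R_i\) is nonempty.

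\textbf{Part a), the~\(R_i\).}
First I would observe that~\(\bigcup_{i \in V_P} R_i = R\) is immediate: every~\(x \in R\) lies in~\(X^i\) for at least one~\(i \in V_P\), and among all such~\(i\) there is a unique one closest to~\(i_0\) in~\(T\) (uniqueness follows because the nodes of~\(P\) containing~\(x\) form a connected subpath of~\(P\) by~(T3'), so ``closest to~\(i_0\)'' picks out a single endpoint of that subpath). That node is by definition the path-node of~\(x\), so~\(x \in R_{i}\) for exactly that~\(i\), which shows both coverage and pairwise disjointness. Hence the~\(R_i\), \(i \in V_P\), form a partition of~\(R\).

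\textbf{Part a), the~\(S_i\).}
For the sets~\(S_i\), recall~\(S_i = \bigcup_{j \in V(T_i)} X^j \setminus R\), where~\(T_i\) is the component of~\(T - E_P\) containing~\(i\). Since the~\(T_i\), \(i \in V_P\), are exactly the components of~\(T - E_P\), their node sets partition~\(V_T\); I would use this to show the~\(S_i\) cover~\(S\): any~\(x \in S\) lies in some~\(X^j\) with~\(j \in V_T\) by~(T1), and~\(j\) lies in exactly one~\(V(T_i)\), so~\(x \in S_i\). For disjointness, suppose~\(x \in S_i \cap S_{i'}\) with~\(i \neq i'\); then~\(x\) lies in clusters of nodes in two different components of~\(T - E_P\), so by~(T3') the connected set~\(I_x = \{j : x \in X^j\}\) must meet~\(P\) (it must contain an internal node of the~\(i\)-to-\(i'\) path in~\(T\), and this path crosses~\(P\)); but then~\(x \in R\), contradicting~\(x \in S\). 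So the~\(S_i\) form a partition of~\(S\), and since~\(R\) and~\(S\) partition~\(V\), every vertex has a unique path-node.

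\textbf{Part b).}
For~\(R_i \neq \emptyset\), I would use nonredundancy. The subtlety — and the only place that needs care — is that~\(X^i\) itself could in principle consist entirely of vertices whose path-node is some other node of~\(P\) closer to~\(i_0\). I would rule this out as follows: let~\(i'\) be the neighbor of~\(i\) on~\(P\) on the~\(i_0\)-side (if~\(i = i_0\), argue instead that~\(X^{i_0} \neq \emptyset\) and every vertex of~\(X^{i_0}\) has path-node~\(i_0\), since~\(i_0\) is the node closest to~\(i_0\)). Since~\((T,\X)\) is nonredundant, \(X^i \not\subseteq X^{i'}\), so there is~\(x \in X^i \setminus X^{i'}\). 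By~(T3'), the set~\(I_x\) is connected in~\(T\); if the path-node of~\(x\) were some node~\(j \in V_P\) strictly closer to~\(i_0\) than~\(i\), then the~\(i_0\)-to-\(i\) subpath of~\(P\) would pass through~\(i'\), and~(T3') applied to~\(j, i \in I_x\) would force~\(i' \in I_x\), i.e.\ \(x \in X^{i'}\), a contradiction. Hence the path-node of~\(x\) is~\(i\), so~\(x \in R_i\). This completes the proof.

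\textbf{Main obstacle.}
The routine part is the bookkeeping around~(T3')/(T3'); the one genuinely load-bearing step is the nonemptiness of~\(R_i\), which is exactly where the nonredundancy hypothesis (invoked at the start of~\cref{subsecGenProofLemma} via~\cref{propTDNonRed}) enters and cannot be dropped.
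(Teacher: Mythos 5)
Your proof is correct and follows essentially the same route as the paper: part a) is handled via the well-definedness of path-nodes, the component structure of \(T-E_P\), and (T3'), while part b) uses nonredundancy to extract a vertex \(x \in X^i \setminus X^{i^-}\) and (T3') to conclude that its path-node is \(i\). You have merely spelled out the details that the paper's one-line proof leaves implicit.
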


\begin{proof}
  \begin{enumerate}[a)]
    \item[]
    \item The statement for the sets~\(R_i\) is obvious, the statement for the sets~\(S_i\) follows from~(T3').
    \item Since~\((T,\X)\) is nonredundant,~\(X^{i_0} \neq \emptyset\) and, if~\(V_P \neq \{i_0\}\), also~\(X^i \not \subseteq X^{i^-}\) for all~\(i \in V_P \setminus \{i_0\}\), where~\(i^-\) denotes the node before~\(i\) on~\(P\). Hence, \(R_i \neq \emptyset\) for all~\(i \in V_P\). \qedhere
  \end{enumerate}
\end{proof}

The sets~\(R_i\) and the nodes on the path~\(P\) both correspond to the vertices in the path~\(P\) in the proof of \cref{lemmaTreeCutPresDiam}: \(R_i\) is a subset of the vertices of~\(G\) and~\(V_P\) is a set of nodes of~\(T\). Similarly, the vertex sets~\(S_i\) and the node sets~\(V(T_i)\setminus\{i\}\) both correspond to the sets~\(T_v'\) in the proof of \cref{lemmaTreeCutPresDiam}.

A \emph{\(P\)-labeling}\footnote{We use the same term as in \cref{secProofTree} as it will be clear from the context whether a tree or a graph with a given tree decomposition is considered.} of~\(G\) with respect to~\( (T,\X)\) is a labeling of the vertices in~\(V\) with~\( \{1,2,\ldots, n\}\), such that
\begin{itemize}
  \item for each node~\(i \in V_P\), the vertices of~\(R_i \cup S_i\) receive consecutive labels and the vertices in~\(R_i\) receive the largest labels among those, and
  \item for all nodes~\(i,j \in V_P\) with~\(i \neq j\), if~\(i_0\) is closer to~\(i\) than to~\(j\), then each vertex in~\(R_i \cup S_i\) has a smaller label than every vertex in~\(R_j \cup S_j\). 
\end{itemize}
From now on, fix a \mbox{\(P\)-labeling} and identify each vertex with its label. As in \cref{subsecTreeLemma}, any number that differs by a multiple of~\(n\) from a label in~\([n]\) is considered to be the same as that label and when comparing vertices we always refer to their labels in~\([n]\). Moreover, the notion of \(a\) is between~\(b\) and~\(c\) from \cref{subsecTreeLemma} is adapted. The labeling is useful for finding certain cuts related to the sets~\(R_i\) and~\(S_i\) in the graph~\(G\). This is made precise by the next proposition, which is a direct consequence of \cref{lemmaRemoveClusterEdges}.

\begin{prop}
  \label{propCutPlabeling}
  Let~\(i\) be an arbitrary node in~\(P\), and denote by~\(i^{-}\) and~\(i^{+}\) the nodes before and after~\(i\) on~\(P\), respectively. Let~\(x^{-}\) be the vertex with the largest label in~\(R_{i^{-}}\) and let~\(x^{+}\) be the vertex with the smallest label in~\(S_{i^{+}} \cup R_{i^{+}}\). Moreover, if~\(i=i_0\), let~\(V_P^+ = V_P \setminus \{i_0\}\); if~\(i = j_0\), let~\(V_P^{-} = V_P\setminus\{j_0\}\); and otherwise let~\(V_P^-\) and~\(V_P^+\) be the node sets of the connected components of~\(P-i\), that contain~\(i^{-}\) and~\(i^{+}\), respectively. 
  Removing from~\(G\) the edges~\(E_G(i)\) decomposes~\(G\) into the following disjoint parts
  \begin{itemize}[leftmargin= \IdentationTDConditions]
      \item an isolated vertex for each~\(v \in R_i\),
      \item if~\(S_i \neq \emptyset\), the part~\(G[S_i]\),
      \item if~\( i \neq i_0\), the subgraph of~\(G\) induced by~\(\bigcup_{j \in V_P^-} (R_j \cup S_j) = \{1, \ldots, x^-\}\), and
      \item if~\( i \neq j_0\), the subgraph of~\(G\) induced by~\(\bigcup_{j \in V_P^+} (R_j \cup S_j) = \{x^+, \ldots, n\}\).
    \end{itemize}
\end{prop}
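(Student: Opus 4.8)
The plan is to derive \cref{propCutPlabeling} directly from \cref{lemmaRemoveClusterEdges} applied at the node~\(i\in V_P\), and then to identify the resulting disjoint parts with the concrete vertex sets described in terms of the \(P\)-labeling. So first I would invoke \cref{lemmaRemoveClusterEdges}: removing the edges in~\(E_G(i)\) from~\(G\) splits~\(G\) into an isolated vertex~\((\{v\},\emptyset)\) for each~\(v\in X^i\), together with~\(G[V_h]\) for every neighbor~\(i_h\) of~\(i\) in~\(T\), where~\(V_h=\bigcup_{j\in V_h^T}X^j\setminus X^i\) and~\(V_h^T\) is the node set of the component of~\(T-i\) containing~\(i_h\). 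The task is then purely bookkeeping: regroup these parts and recognize each group as one of the four items claimed.

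Next I would handle the regrouping. The neighbors of~\(i\) in~\(T\) fall into three classes: the node~\(i^-\) before~\(i\) on~\(P\) (present unless~\(i=i_0\)), the node~\(i^+\) after~\(i\) on~\(P\) (present unless~\(i=j_0\)), and the remaining neighbors, which are exactly the neighbors of~\(i\) inside the tree~\(T_i=T-E_P\). For the latter, the components of~\(T-i\) they determine are exactly the components of~\(T_i-i\), so their union of clusters minus~\(X^i\) is precisely~\(S_i=\bigcup_{j\in V(T_i)}X^j\setminus R\): one checks that~\(X^i\cap R\) is what gets subtracted, and since~\(R_i=X^i\cap R\setminus(\text{earlier part-nodes})\)… actually more cleanly, for a node~\(j\in V(T_i)\setminus\{i\}\), property~(T3') forces~\(X^j\cap R\subseteq X^i\), so~\(X^j\setminus X^i=X^j\setminus R\), and summing over all such~\(j\) gives~\(S_i\). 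Thus the union of the parts~\(G[V_h]\) over neighbors~\(i_h\in V(T_i)\) is~\(G[S_i]\), present iff~\(S_i\neq\emptyset\) (if~\(S_i=\emptyset\) there simply are no such parts, or they are empty and absorbed). For~\(i^-\): the component of~\(T-i\) containing~\(i^-\) has node set~\(V_P^-\cup(\text{off-path nodes hanging off it})\), and its clusters minus~\(X^i\) should equal~\(\bigcup_{j\in V_P^-}(R_j\cup S_j)\); here I would use \cref{propPropRiSi}\ref{propPropRiSiPartition} together with the fact that, by~(T3'), any vertex of~\(G\) appearing both in~\(X^i\) and in some cluster on the~\(i^-\)-side of~\(T-i\) has its path-node weakly before~\(i\), hence lies in~\(\bigcup_{j\in V_P^-\cup\{i\}}(R_j\cup S_j)\), and the subtraction of~\(X^i\) exactly removes the~\(R_i\cup S_i\) contribution. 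Symmetrically for~\(i^+\).

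Finally I would translate the index sets into label intervals using the \(P\)-labeling: by the two defining properties of a \(P\)-labeling, the labels of~\(\bigcup_{j\in V_P^-}(R_j\cup S_j)\) form an initial segment~\(\{1,\ldots,x^-\}\) where~\(x^-\) is the largest label in~\(R_{i^-}\) (the~\(R_{i^-}\) labels being the top of the block for~\(i^-\), which is the last on-path block before~\(i\)), and the labels of~\(\bigcup_{j\in V_P^+}(R_j\cup S_j)\) form a final segment~\(\{x^+,\ldots,n\}\) where~\(x^+\) is the smallest label in~\(S_{i^+}\cup R_{i^+}\); \cref{propPropRiSi}\ref{propPropRiSiNonEmpty} guarantees~\(R_{i^-}\neq\emptyset\) so~\(x^-\) is well defined. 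The main (and really only) obstacle is the careful verification that the off-path subtrees attached to on-path nodes other than~\(i\) contribute their vertices to the correct side — i.e.\ that~\(S_j\) for~\(j\neq i\) is never split by removing~\(X^i\) — which again is a direct consequence of~(T3'); everything else is routine rearrangement of the conclusion of \cref{lemmaRemoveClusterEdges}. Since the proposition is explicitly billed as ``a direct consequence of \cref{lemmaRemoveClusterEdges},'' I would keep the write-up short, stating the correspondence of parts and citing~(T3') and \cref{propPropRiSi} for the two non-obvious identifications.
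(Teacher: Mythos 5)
Your approach is exactly what the paper intends: it offers no proof of \cref{propCutPlabeling} beyond declaring it a direct consequence of \cref{lemmaRemoveClusterEdges}, and your regrouping of that lemma's parts via~(T3'), \cref{propPropRiSi} and the \(P\)-labeling is the right way to fill this in. One bookkeeping slip: the union of the clusters on the \(i^-\)-side of \(T-i\) minus \(X^i\) does \emph{not} equal \(\bigcup_{j\in V_P^-}(R_j\cup S_j)\) --- it is missing the vertices of \(X^i\setminus R_i\) (those of \(X^i\) whose path-node strictly precedes \(i\)), and there is no ``\(R_i\cup S_i\) contribution'' to subtract, since by~(T3) no vertex of \(R_i\) or \(S_i\) appears in any cluster on that side. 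This is harmless, because those leftover vertices are isolated in \(G-E_G(i)\) and can be absorbed into the part \(\{1,\ldots,x^-\}\) without creating edges between parts, so the regrouping and the label-interval identification go through as you describe.
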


\subsubsection{Construction of the Black Set}
\label{subsubsecGenCases}

The idea for the proof of \cref{lemmaCutPresR} is similar to its tree version, namely the proof of \cref{lemmaTreeCutPresDiam}. One difference is that, instead of cutting along single edges of the decomposition tree, we will work with cuts arising from the removal of entire clusters from the graph. This is also reflected by the slightly different polylogarithmic terms in the bounds in \eqref{eqTreeKSecImpr} and \cref{thmGenKSec}. Again, we will define a set~\(\tilde{V}\) that contains enough vertices to form the desired set~\(B\) by applying \cref{thmGenTreeDiam} to a graph~\(\tilde{G}\) with~\(V(\tilde{G}) = \tilde{V}\) and a suitable tree decomposition~\( (\tilde{T}, \tilde{\X})\). In the proof of \cref{lemmaTreeCutPresDiam}, the forest induced by the set~\(\tilde{V}\) was not connected and, hence, it was easy to take care of rounding effects concerning the relative diameter of~\(G[\tilde{V}]\). Now, when working with tree decompositions, the graph induced by~\(\tilde{V}\) might be connected and it requires more work to reorganize the tree decomposition.  More precisely, we will artificially disconnect the graph~\(G[\tilde{V}]\),  and then glue two tree decompositions of subgraphs of~\(G[\tilde{V}]\) together to obtain a tree decomposition~\((\tilde{T}, \tilde{\X})\) of~\(\tilde{G}\) with~\(r(\tilde{T}, \tilde{\X}) \geq \frac{1}{2}r(T,\X)\).

Fix an arbitrary integer~\(m \in [n-1]\) and observe that~\( |R| = rn\). For two vertices~\(x,y \in V\),  define the \emph{\(R\)-distance} of~\(x\) and~\(y\) as
 \[ d_R(x,y) = \left| \left\{ v \in R \setminus \{y\}\colon\, \vphantom{'}\text{\(v\) is between~\(x\) and~\(y\)} \right\} \right|.\]
Analogously to finding the vertex~\(v\) in \cref{subsecTreeLemma}, we can argue that there is a vertex~\(v \in V\) with \(d_R (v,v+m) = \Bfloor{rm}\) and  \(v \in R\) or~\(v+m \in R\). Define 
  \[ M := \{u \in V\colon\, u \text{ is between~\(v\) and~\(v+m -1\)}\} , \]
and note that~\(|M| = m\). 

In the following figures, the tree~\(T\) is drawn in the top and the vertex sets containing vertices of the graph~\(G\) are drawn underneath the corresponding node of~\(P\). More precisely, the path~\(P\subseteq T\) is drawn explicitly on the top and, for each~\(h \in V_P\), the node~\(h\) is drawn in black and the tree~\(T_h\) is indicated by a triangle. Furthermore, for each~\(h \in V_P\), the sets~\(R_h\) and~\(S_h\) are represented by a circle and a trapezoid, respectively, and are drawn underneath the node~\(h\). Areas that are colored gray inside a set~\(R_h\) visualize that some vertices of~\(R_h\) are counted by~\(d_R(v, v+m)\). 

\textbf{Case 1:} \(v \in R\) and \(v+m \in R\). \\
Define~\(B:=M\) and \(W := V \setminus B\). Due to \cref{propCutPlabeling}, we have~\(E_G(B,W) \subseteq E_G(i) \cup E_G(j)\) and the cut~\((B,W)\) satisfies the desired bound on its width, see \cref{figCutPresRCase1}.  Let~\( (T', \X')\) be the tree decomposition induced by~\(G[W]\) in~\( (T,\X)\). Then, 
  \[ r(T', \X') \ \geq \ \frac{w_{\X'}(P)}{|W|} \ = \ \frac{|R| - d_R(v, v+m)}{|W|} \ \geq \ \frac{ rn - rm }{n-m} \ = \ r ,\]
as desired.

\begin{figure}
  \begin{subfigure}{0.48 \textwidth}
  \begin{center}
    \includegraphics{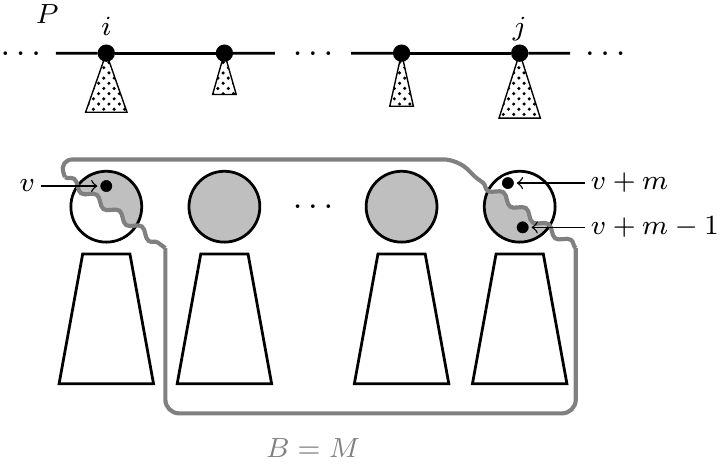}
  \end{center}
  \caption{Case 1, where~\(v \in R\) and~\(v+m \in R\).}
  \label{figCutPresRCase1}
  \end{subfigure}
  \begin{subfigure}{0.48 \textwidth}
  \begin{center}
    \includegraphics{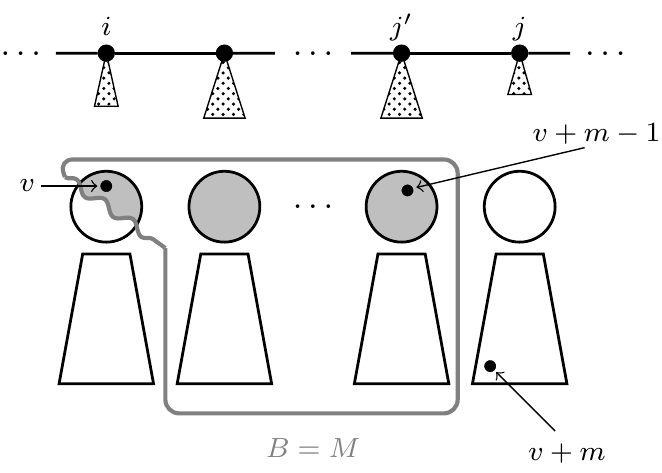}
  \end{center}
  \caption{Case 2a, where~\(v \in R\) and~\(v+m-1 \in R\).}
  \label{figCutPresRCase2a}
  \end{subfigure}
  
  \caption{Construction of the black set in Case~1 and Case~2a.}
  \label{figCutPresRCase1and2a}
\end{figure}
 
\textbf{Case 2:} \(v \in R\) and \(v+m \not\in R\). \\
As in \cref{subsecTreeLemma}, the case when~\(v+m-1 \in R\) is treated separately for technical reasons. Let~\(j'\) be the node before~\(j\) on~\(P\). 

\textbf{Case~2a:} \(v+m-1 \in R\). \\
Similarly to Case~1, the cut~\((B,W)\) with~\(B:=M\) and~\(W:=V\setminus M\) satisfies all requirements, see also \cref{figCutPresRCase2a}.

\textbf{Case 2b:} \(v+m-1 \not\in R\). \\
Define \( \tilde{m} := 2 |S_j \cap M|\), which satisfies~\(2 \leq \tilde{m} \leq 2m\) as~\(v+m-1 \in S_j\) due to \cref{propPropRiSi}\ref{propPropRiSiNonEmpty}. \cref{lemmaApproxCutGen} guarantees an \mbox{\(\tilde{m}\)-approximate} cut~\((B_j,W_j)\) in~\(G[S_j]\) with~\(e_{G[S_j]}(B_j, W_j) \leq t \Delta(G)\), because the induced tree decomposition of~\(G[S_j]\) with respect to~\((T,\X)\) has width at most~\(t-1\). Then, the set \( \tilde{V} := ( M \setminus S_j) \cup B_j \) satisfies \(  |\tilde{V} |  =  m - \tfrac{1}{2}\tilde{m} + |B_j| \) and 
\begin{align}
  \label{proofGenCase2}
  m \ \leq \ | \tilde{V}| \  \leq \ m + \tfrac{1}{2} \tilde{m} \ \leq \ 2m,
\end{align}
see also \cref{figCutPresRCase2b}. Note that~\(\tilde{V}\) might contain vertices from~\(X^j\), the cluster of node~\(j\), and, hence,~\(G[\tilde{V}]\) might be connected. Let~\(\tilde{G}\) be the graph obtained from~\(G[\tilde{V}]\) by removing all edges in~\(E_G(j)\) and observe that~\(e_{\tilde{G}} (B_j, \tilde{V} \setminus B_j) = \emptyset\) due to \cref{propCutPlabeling}. Denote by~\( (\tilde{T}_1, \tilde{\X}_1 ) \) and~\( (\tilde{T}_2, \tilde{\X}_2 ) \) the induced tree decompositions of~\(\tilde{G}[\tilde{V} \setminus B_j]\) and~\(\tilde{G}[B_j]\) with respect to~\((T,\X)\), respectively. Furthermore, let~\(\tilde{P}_1 = P\) and let~\(\tilde{P}_2\) be a path in~\((\tilde{T}_2, \tilde{\X}_2)\) that consists of one node~\(h_0\) whose cluster in~\(\tilde{\X}_2\) is non-empty. Then,~\(w_{\tilde{\X}_1}(\tilde{P}_1) \geq d_R(v, v+m)\) and~\(w_{\tilde{\X}_2}(\tilde{P}_2) \geq 1\). Now, define~\( \tilde{T} \) to be the tree obtained from taking one copy of~\(\tilde{T}_1\) and  one copy of~\(\tilde{T}_2\) with disjoint node sets and adding an edge between~\(j_0\) in~\(\tilde{T}_1\) and~\(h_0\) in~\(\tilde{T}_2\). Denote by~\(\tilde{\X}\) the corresponding union of~\(\tilde{\X}_1\) and~\(\tilde{\X}_2\). Then, \( (\tilde{T}, \tilde{\X})\) is a tree decomposition of~\( \tilde{G}\) of width at most~\(t-1\) with
\[ r(\tilde{T}, \tilde{\X}) \ \geq \ \frac{w_{\tilde{\X}_1}(\tilde{P}_1) + w_{\tilde{\X}_2}( \tilde{P}_2 )}{|\tilde{V}|} \ \geq \ \frac{d_R (v,v+m) + 1}{2m} \ \geq \ \tfrac{1}{2}r\]
due to~\eqref{proofGenCase2}. Therefore, \cref{thmGenTreeDiam} implies that~\(\tilde{G}\) allows a cut~\((\tilde{B}, \tilde{W})\) with~\(|\tilde{B}| = m\) and 
\begin{align}
  \label{proofGenWidthTilde} 
  e_{\tilde{G}}(\tilde{B}, \tilde{W}) \ \leq \ \frac{t}{2}  \left( \left( \log_2 \left( \frac{1}{r} \right) \right)^2 + 11 \log_2 \left( \frac{1}{r} \right) + 18 \right)\Delta(G).
\end{align}

\begin{figure}
  \begin{subfigure}{0.48 \textwidth}
  \begin{center}
    \includegraphics{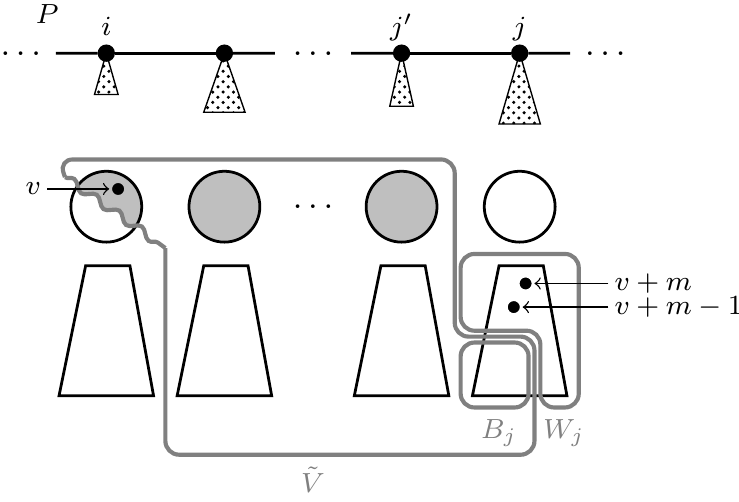}
  \end{center}
  \caption{Case~2b, where~\(v \in R\) and \(v+m, v+m-1 \notin R\). Note that~\(v+m\) and~\(v+m-1\) might also be in the set~\(B_j\).}
  \label{figCutPresRCase2b}
  \end{subfigure}
  \hspace{1em}
  \begin{subfigure}{0.48 \textwidth}
  \begin{center}
    \hspace*{-1em}\includegraphics{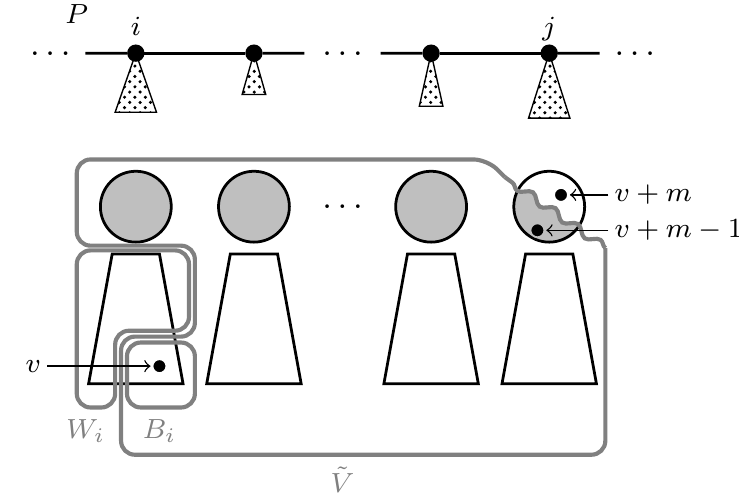}
  \end{center}
  \caption{Case~3, where \(v \not\in R\) and \(v+m \in R\). Note that~\(v+m-1\) might also be in~\(S_j\) and~\(v\) might also be in~\(W_i\).}
  \label{figCutPresRCase3}
  \end{subfigure}
  
  \caption{Construction of the black set in Case~2b and Case~3.}
  \label{figCutPresRCase2band3}
\end{figure}

Now, define~\(B := \tilde{B}\) and~\(W :=  V\setminus \tilde{B} \). Furthermore, denote by~\( (T',\X')\) the tree decomposition induced by~\(G[W]\) in~\( (T,\X)\). By construction, there are exactly~\(d_R(v, v+m)\) vertices of~\(R\) in~\(\tilde{V}\) and, hence, at least~\(|R| - d_R(v, v+m) \geq rn-rm\) vertices of~\(R\) are in~\(W\) and
\[ r(T',\X') \ \geq \  \frac{ w_{\X'}(P) }{|W|} \ \geq \ \frac{|R \cap W|}{|W|} \ \geq \ \frac{ r(n - m) }{n-m}  \geq \ r.\]

Next, the width of the cut~\((B,W)\) in~\(G\) is analyzed. Let~\( \hat{G} := G- E_G(i) - E_G(j)\), which contains no edges between the sets~\(M\setminus S_j\), \(S_j\), and~\(V\setminus(M \cup S_j)\) due to \cref{propCutPlabeling}. Therefore,
\[e_G(\tilde{V},V\setminus \tilde{V}) \ \leq \ 2 t \Delta(G)  + e_{\hat{G}} (\tilde{V},V \setminus \tilde{V}) \ \leq \ 3 t \Delta(G),\]
where the previous estimation also counts all edges that were removed from~\(G[\tilde{V}]\) when constructing~\(\tilde{G}\). Now,~\eqref{proofGenWidthTilde} yields the desired bound on the width of~\((B,W)\).

\textbf{Case 3:} \(v \not\in R\) and \(v+m \in R\). \\
This case is similar to Case~2b, but not completely analogous. Instead of splitting~\(S_j = B_j \cup W_j\), the set~\(S_i\) is split now. To do so, define~\(\tilde{m} := 2 |S_i \cap M|\), which satisfies~\(2 \leq \tilde{m} \leq 2m\). \cref{lemmaApproxCutGen} implies that there is an \mbox{\(\tilde{m}\)-approximate}-cut~\((B_i, W_i)\) in~\(G[S_i]\) with \(e_{G[S_i]} (B_i, W_i) \leq t \Delta(G)\). Similar to Case~2b, \(\tilde{V} := (M \setminus S_i) \cup B_i\) satisfies~\(m \leq | \tilde{V} | \leq 2m\), see also \cref{figCutPresRCase3}. Consider the graph~\(\tilde{G}\) obtained from~\(G[\tilde{V}]\) by removing all edges in~\(E_G(i)\) and note that~\(\tilde{G}\) does not contain any edge between the vertices in~\( B_i\) and the vertices in~\(\tilde{V} \setminus B_i \) due to \cref{propCutPlabeling}. The remaining part of Case~3 is analog to Case~2b: First, a tree decomposition~\( (\tilde{T},\tilde{\X})\) of~\(\tilde{G}\) with~\(r(\tilde{T},\tilde{\X}) \geq \frac{1}{2}r\) and width at most~\(t-1\) is constructed. Then, \cref{thmGenTreeDiam} can be used to obtain a set~\(\tilde{B} \subseteq \tilde{V}\) with~\(|\tilde{B}| =m\) such that~\((B,W)\) with~\(B:=\tilde{B}\) and~\(W := V \setminus B\) is a cut in~\(G\) with the desired properties. This completes the proof of \cref{lemmaCutPresR}.

\subsection{Algorithm for Tree-Like Graphs}
\label{subsecGenAlgo}

The goal of this section is to prove the algorithmic part of \cref{thmGenKSec}, i.e., to argue that there is an algorithm that, when given a tree decomposition~\((T,\X)\) of a graph~\(G\) on~\(n\) vertices, computes a \ksec{k} in~\(G\) with width within the bound stated in \cref{thmGenKSec} in~\(\bigO(k \|(T,\X)\| )\) time.  In general, it is \mbox{NP-hard} to compute a tree decomposition of minimum width~\cite{ArnborgCorneil} but, for fixed~\(t\in \mathbb{N}\), there is an algorithm that, when given a graph~\(G\) with~\({\tw(G) \leq t-1}\), computes a tree decomposition of width at most~\(t-1\) in linear time \cite{Bodlaender96}. Here, we assume that a tree decomposition of the graph~\(G\) is provided as input. 

For the implementation we always assume that the input graph~\(G\) satisfies~\(V(G)=[n]\) for some integer~\(n\), and that the clusters of the provided tree decomposition~\((T,\X)\) are given as unordered lists. Moreover, we assume that~\(T\) is given by its adjacency lists and that each node of~\(T\) has a link pointing to its cluster. The algorithm described here only uses~\((T,\X)\) and not the graph~\(G\) itself. Sets, and in particular the sets~\(B_{\ell}\) of the desired \ksec{k}, are stored as unordered lists of vertices of~\(G\). Therefore, the union of two disjoint sets is a simple concatenation of lists and takes constant time. \cref{tableRunningTimes} gives an overview on the subroutines that are used here and discussed briefly in~\cite{PaperGenTreeDiam} and in detail in~\cite{thesisTina}. The following description of the algorithm contained in \cref{thmGenKSec} focuses on its main aspects. A detailed description can be found in Chapter~6.3 of~\cite{thesisTina}.

\begin{table}[b]
\begin{tabular}{l@{\hspace{1em}}c@{\hspace{1em}}c@{\hspace{1em}}l}
  \toprule
  Algorithm/Task & Running Time & Details \\
  \midrule
  approximate cut (\cref{lemmaApproxCutGen})  & \( \bigO( \| (T,\X) \| ) \) & Lemma~4 in~\cite{PaperGenTreeDiam}\\ 
  induced tree decomposition for a subgraph & \( \bigO(\| (T,\X) \|) \) &  clear \\
  make~\((T,\X)\) nonredundant (\cref{propTDNonRed}) & \( \bigO( \|(T,\X)\| ) \) & Proposition~20 in~\cite{PaperGenTreeDiam}\\ 
  heaviest path in~\( (T,\X)\) & \( \bigO( \| (T,\X) \| ) \) & Lemma~21 in~\cite{PaperGenTreeDiam} \\
  \mbox{\(P\)-labeling} for a path~\( P \subseteq T\) (\cref{lemmaPLabeling}) & \( \bigO( \| (T,\X) \| ) \) & Lemma~22 in~\cite{PaperGenTreeDiam} \\
  \bottomrule
\end{tabular}
\caption{Overview on subroutines described in~\cite{PaperGenTreeDiam}. The input for each sub\-rou\-tine is a tree decomposition~\( (T,\X)\) of an arbitrary graph with vertex set~\([n]\) for some integer~\(n\).}
\label{tableRunningTimes}
\end{table}

Consider a tree decomposition~\((T_0 ,\X_0)\) of some graph~\(G_0\) and let~\(G\) be some subgraph of~\(G_0\). When given a list of the vertices in~\(G\), it is easy to traverse~\(T_0\) and delete all vertices not in~\(G\) from the clusters in~\(\X_0\) in order to compute the induced tree decomposition~\((T,\X)\) of~\(G\) with respect to~\((T_0,\X_0)\) in time proportional to~\(\|(T_0,\X_0)\|\). To satisfy the requirement~\(V(G) = [n]\) for some integer~\(n\), which is needed for the subroutines in \cref{tableRunningTimes}, a bijection between~\(V(G)\) and~\([n]\) can be set up while computing~\((T,\X)\). Alternatively, to avoid the relabeling, the same arrays can be used in all calls of the subroutines with a single initialization in the beginning. Therefore, it suffices to argue that a cut with the properties in \cref{lemmaCutPresR} can be computed in~\(\bigO(\|(T,\X)\|)\) time. 

Consider a tree decomposition~\((T,\X)\) of some graph~\(G\) on~\(n\) vertices with~\(V(G)=[n]\) and fix an integer~\(m \in [n]\).  The algorithm described here follows the construction from \cref{subsubsecGenCases} and uses the notation from \cref{subsubsecGenNotation}. Due to \cref{propTDNonRed}, we may assume that~\((T,\X)\) is nonredundant. Computing a heaviest path~\(P \subseteq T\) and a \mbox{\(P\)-labeling} takes~\(\bigO(\|(T,\X)\|)\) time according to \cref{tableRunningTimes}. While doing so, further parameters related to the labeling can be computed as stated by the next lemma, which is also from~\cite{PaperGenTreeDiam}. 

\begin{lemma}[Lemma~22 in~\cite{PaperGenTreeDiam}]\label{lemmaPLabeling}
  Given a tree decomposition~\((T,\X)\) of a graph~\(G=(V,E)\) on~\(n\) vertices with~\(V=[n]\) and a path~\(P \subseteq T\), a \mbox{\(P\)-labeling} of~\(G\)  can be computed in~\(\bigO(\|(T,\X)\|)\) time. While doing so, the following parameters can be computed (using the notation from Section~\ref{subsubsecGenNotation}):
  \begin{itemize}[leftmargin= \IdentationTDConditions]
    \item two integer arrays~\(A_L\) and~\(A_V\), each of length~\(n\), such that for~\(x \in V\) the entry~\(A_L[x]\) is the label of vertex~\(x\) and for~\(\ell \in [n]\) the entry~\(A_V[\ell]\) is the vertex that received label~\(\ell\), 
    \item a binary array~\(A_R\) of length~\(n\), such that for~\(x \in V\) the entry~\(A_R[x]\) is one if and only if~\(x \in R\), 
    \item an integer array~\(A_P\) of length~\(n\), such that for~\(x\in V\) the entry~\(A_P[x]\) is the path node of~\(x\), and
    \item a list~\(L_P\) of the nodes on the path~\(P\) in the order in which they occur when traversing~\(P\), including, for each~\(h \in V_P\), a pointer to the root of~\(T_h\) stored as an arborescence with root~\(h\).
  \end{itemize}
\end{lemma}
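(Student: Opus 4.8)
The plan is to adapt the linear-time construction for trees sketched in \cref{subsecTreeAlgo} to the present setting, using the combinatorial structure set up in \cref{subsubsecGenNotation}. Throughout, note that $n=|V|\le\sum_{i\in V(T)}|X^i|\le\|(T,\X)\|$ by (T1) and that $|V(T)|\le\|(T,\X)\|$, so any procedure that touches each node of $T$, each cluster entry, and each vertex of $G$ a bounded number of times runs in $\bigO(\|(T,\X)\|)$ time. First I would allocate and initialize the arrays $A_L,A_V,A_R,A_P$ of length $n$; since $V=[n]$, the vertices of $G$ can index these arrays directly.

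Next, traverse the given path $P$ from the fixed end $i_0$ to its other end $j_0$ to obtain the ordered list $L_P=(p_1,\ldots,p_s)$ with $p_1=i_0$ and $p_s=j_0$, and mark every node lying on $P$ as well as every edge in $E_P$. Then compute the forest $T-E_P$: run a graph search on $T$ started successively from each $p_\ell\in V_P$ that never traverses an edge of $E_P$ and never re-enters an already visited node. This visits every node of $T$ exactly once, records for each node $j$ the index $\rho(j)\in V_P$ of the unique component $T_{\rho(j)}$ of $T-E_P$ containing $j$, and produces parent pointers turning each $T_h$ into an arborescence rooted at $h$; attaching these arborescences to the corresponding entries of $L_P$ yields the list required in the statement. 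This takes $\bigO(|V(T)|)$ time.

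Now build the partition of $V$ into the sets $R_i$ and $S_i$ of \cref{subsubsecGenNotation}, together with $A_R$ and $A_P$. Scan the clusters $X^{p_1},X^{p_2},\ldots,X^{p_s}$ in this order, and whenever a vertex $x$ occurs in $X^{p_\ell}$ for the \emph{first} time, set $A_R[x]:=1$, $A_P[x]:=p_\ell$, and append $x$ to a list representing $R_{p_\ell}$. By (T3') the nodes of $T$ whose cluster contains $x$ induce a subtree, so their intersection with $P$ is a subpath, whence $p_\ell$ is exactly the node of $P$ closest to $i_0$ that contains $x$; thus this rule computes the path-node of every $x\in R$ correctly. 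Afterwards scan the clusters $X^j$ with $j\notin V_P$, and for each $x\in X^j$ with $A_R[x]=0$ that has not yet been assigned a path-node, set $A_P[x]:=\rho(j)$ and append $x$ to a list representing $S_{\rho(j)}$. Again by (T3'), a vertex $x\in S$ cannot lie in clusters of two distinct components of $T-E_P$, since the connecting path in $T$ would meet $P$ and force $x\in R$; hence the value $\rho(j)$ obtained this way is independent of the choice of $j$ and equals the path-node of $x$ in the sense of \cref{subsubsecGenNotation}. This phase costs $\bigO\bigl(\sum_{i\in V(T)}|X^i|\bigr)=\bigO(\|(T,\X)\|)$.

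Finally, assign the labels: set a counter $c:=1$, and for $\ell=1,\ldots,s$ process first the list $S_{p_\ell}$ and then the list $R_{p_\ell}$, setting $A_L[x]:=c$, $A_V[c]:=x$ and incrementing $c$ for each processed vertex $x$. Since the sets $R_{p_\ell}\cup S_{p_\ell}$, $\ell\in[s]$, partition $V$ by \cref{propPropRiSi}\ref{propPropRiSiPartition}, the counter $c$ runs through exactly $[n]$, each vertex gets one label, within each group the vertices of $R_{p_\ell}$ receive the largest labels, and vertices with a path-node closer to $i_0$ get smaller labels; hence the result is a valid $P$-labeling. This step and writing out the output take $\bigO(n)$ time, so the total running time is $\bigO(\|(T,\X)\|)$. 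The step that needs most care is the path-node computation in the previous paragraph: one must verify that the ``first occurrence along $P$'' rule for $R$ and the ``component containing any cluster meeting $x$'' rule for $S$ are each correct and together cover all of $V$ without conflict, which is precisely the content of (T3') together with \cref{propPropRiSi}\ref{propPropRiSiPartition}.
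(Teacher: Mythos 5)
Your construction is correct: the first-occurrence-along-$P$ rule for assigning path-nodes to vertices of $R$, the component-of-$T-E_P$ rule for vertices of $S$ (justified via (T3')), and the final sweep assigning labels group by group with $S_{p_\ell}$ before $R_{p_\ell}$ together produce exactly a $P$-labeling with all the stated auxiliary arrays in $\bigO(\|(T,\X)\|)$ time. Note that the paper does not prove this lemma itself but imports it as Lemma~22 of \cite{PaperGenTreeDiam}; your argument is the natural generalization of the DFS-based $P$-labeling sketched for trees in \cref{subsecTreeAlgo}, so it matches the intended approach rather than taking a different route.
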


From now on,~\(A_L\), \(A_V\), \(A_R\), \(A_P\), and~\(L_P\) denote the arrays and the list from the previous lemma. As in \cref{subsecTreeAlgo}, in the implementation, vertices and labels are not identified. The arrays~\(A_L\) and~\(A_V\) allow to convert vertices to labels and vice versa in constant time. For~\(x \in V\) denote by~\(d_1(x)\) the \mbox{\(R\)-distance} of the vertex with label~\(1\) and~\(x\). Observe that by using~\(A_V\) and~\(A_R\), all values~\(d_1(x)\) can be computed simultaneously in~\(\bigO(n)\) time. Then, \(d_R(x,y) = d_1(y) -d_1(x)\) for all~\(x,y \in V\) where~\(x\) is smaller than~\(y\) and, thus, a vertex~\(v \in V\) with~\(d_R(v, v+m) = \Bfloor{rm}\) and~\(v \in R\) or~\(v+m \in R\) can be found in~\(\bigO(n)\) time. The set~\(M := \{v, v+1, \dots, v+m-1\}\) can be read off the array~\(A_V\) in~\(\bigO(n)\) time. Using the array~\(A_R\), the algorithm can determine in constant time which of the cases from \cref{subsubsecGenCases} applies. If Case~1 or Case~2a applies, there is nothing more to do. So, assume that Case~2b applies, i.e.,~\(v\in R\) as well as~\(v+m-1 \not\in R\) and~\(v+m \not\in R\). If Case~3 applies, the algorithm can be implemented similarly to Case~2b. With the array~\(A_P\) the path-node~\(j\) of~\(v+m\) can be determined in constant time. For each~\(h \in V_P\) and each~\(x \in V\) the following holds
\[ x \in S_h \quad \quad \Leftrightarrow \quad \quad \text{\(x \not\in R\) and the path-node of~\(x\) is~\(h\)} \quad \quad \Leftrightarrow \quad \quad A_R[x] = 0 \text{ and } A_P[x]=h. \]
Hence, for each~\(x \in V\), it takes constant time to check whether~\(x\) lies in~\(S_j\) and the algorithm can compute a list of the vertices in~\(M \setminus S_j\) in~\(\bigO(n)\) time. Furthermore, the induced tree decomposition for~\(G[S_j]\), say~\((\hat{T}, \hat{\X})\), can be computed in~\(\bigO(\|(T,\X)\|)\) time. Keeping track of the vertex with the smallest label in~\(S_j\), it is easy to shift the labels such that the requirement~\(V(\hat{G}) = [\hat{n}]\) for some integer~\(\hat{n}\) is satisfied for the underlying graph~\(\hat{G} \approx G[S_j]\). Now, \cref{lemmaApproxCutGen} implies that the \mbox{\(\tilde{m}\)-approximate} cut~\((B_j, W_j)\) in~\(G[S_j]\) can be computed in time proportional to~\(\| (\hat{T}, \hat{\X}) \| \leq \|(T, \X)\|\). Recall the construction of the tree decomposition~\((\tilde{T}, \tilde{\X})\) for the graph~\(\tilde{G}\). Computing the tree decompositions~\((\tilde{T}_1,\tilde{\X}_1)\) and~\((\tilde{T}_2,\tilde{\X}_2)\) as well as the paths~\(\tilde{P}_1\) and~\(\tilde{P}_2\) takes~\(\bigO(\|(T,\X)\|)\) time. Hence, also~\((\tilde{T}, \tilde{\X})\) can be computed in time proportional to~\(\|(T,\X)\|\) and satisfies~\(\|(\tilde{T}, \tilde{\X})\| \leq 2 \|(T,\X)\|\). So, applying the algorithm contained in \cref{thmGenTreeDiam} to~\(\tilde{G}\) with the tree decomposition~\((\tilde{T}, \tilde{\X})\) requires~\(\bigO(\|(T,\X)\|)\) time and yields the set~\(\tilde{B}\). Using that~\(n \leq \|(T,\X)\|\) due to~(T1), the desired set~\(B=\tilde{B}\) is computed in time proportional to~\(\|(T,\X)\|\).

\bibliographystyle{abbrv}
\addcontentsline{toc}{section}{References} 
\bibliography{references}
\addcontentsline{toc}{section}{References} 

\end{document}